\journal{Applied and Computational Harmonic Analysis}
\begin{document}
\sloppy

\begin{frontmatter}

\title{Local Behavior of Sparse Analysis Regularization: Applications to Risk Estimation}

\author{Samuel~Vaiter}
\ead{samuel.vaiter@ceremade.dauphine.fr}
\author{Charles-Alban~Deledalle}
\ead{deledalle@ceremade.dauphine.fr}
\author{Gabriel~Peyr\'{e}}
\ead{gabriel.peyre@ceremade.dauphine.fr}
\address{CEREMADE, CNRS, Universit\'{e} Paris-Dauphine, Place du Mar\'{e}chal De Lattre De Tassigny, 75775 Paris Cedex 16, France}

\author{Charles~Dossal}
\ead{charles.dossal@math.u-bordeaux1.fr}
\address{IMB, Universit\'{e} Bordeaux 1, 351, Cours de la Lib\'{e}ration, 33405 Talence Cedex, France}

\author{Jalal~Fadili}
\ead{Jalal.Fadili@greyc.ensicaen.fr}
\address{GREYC, CNRS-ENSICAEN-Universit\'{e} de Caen, 6, Bd du Mar\'{e}chal Juin, 14050 Caen Cedex, France}

\begin{abstract}

In this paper, we aim at recovering an unknown signal $x_0$ from noisy measurements $y = \Phi x_0 + w$, where $\Phi$ is an ill-conditioned or singular linear operator and $w$ accounts for some noise. To regularize such an ill-posed inverse problem, we impose an analysis sparsity prior. More precisely, the recovery is cast as a convex optimization program where the objective is the sum of a quadratic data fidelity term and a regularization term formed of the $\lun$-norm of the correlations between the sought after signal and atoms in a given (generally overcomplete) dictionary. The $\lun$-sparsity analysis prior is weighted by a regularization parameter $\lambda > 0$. In this paper, we prove that any minimizers of this problem is a piecewise-affine function of the observations $y$ and the regularization parameter $\lambda$. As a byproduct, we exploit these properties to get an objectively guided choice of $\lambda$. In particular, we develop an extension of the Generalized Stein Unbiased Risk Estimator (GSURE) and show that it is an unbiased and reliable estimator of an appropriately defined risk. The latter encompasses special cases such as the prediction risk, the projection risk and the estimation risk. We apply these risk estimators to the special case of $\lun$-sparsity analysis regularization. We also discuss implementation issues and propose fast algorithms to solve the $\lun$ analysis minimization problem and to compute the associated GSURE. We finally illustrate the applicability of our framework to parameter(s) selection on several imaging problems.

\end{abstract}

\begin{keyword}
sparsity, analysis regularization, inverse problems, $\lun$ minimization, local behaviour, degrees of freedom, SURE, GSURE, unbiased risk estimation.
\end{keyword}

\end{frontmatter}


\section{Introduction} 
\label{sec:intro}

\subsection{Regularization of Linear Inverse Problems} 
\label{sub:intro-linear}

In many applications, the goal is to recover an unknown signal $x_0 \in \RR^N$ from noisy and linearly degraded observations $y \in \RR^Q$.
The forward observation model reads
\begin{equation}\label{eq:linear-problem}
	y = \Phi x_0 + w,
\end{equation}
where $w \in \RR^Q$ is the noise component and the mapping $\Phi: \RR^N \rightarrow \RR^Q$ is a known linear operator which generally models an acquisition process that entails loss of information so that $Q \leq N$. Even when $Q=N$, $\Phi$ is typically ill-conditioned or even rank-deficient.
In image processing, typical applications covered by the above degradation model are entry-wise masking (inpainting), convolution (acquisition blur), Radon transform (tomography) or a random sensing matrix (compressed sensing).

Solving for an accurate approximation of $x_0$ from the system \eqref{eq:linear-problem} is generally ill-posed~\cite{kirsch2011introduction}.
In order to regularize them and reduce the space of candidate solutions, one has to incorporate some prior knowledge on the typical structure of the original object $x_0$. This prior information accounts for the smoothness of the solution and can range from uniform smoothness assumption to more complex geometrical priors.

Regularization is a popular way to impose such a prior, hence making the search for solutions feasible. The general variational problem we consider can be stated as
\begin{equation}\label{eq:reg-noise}
  \solS \in \uArgmin{x \in \RR^N} F(x,y) + \lambda R(x) ,
\end{equation}
where $F$ is the so-called data fidelity term, $R$ is an appropriate regularization term that encodes the prior on the sought after signal, and $\lambda > 0$ a regularization parameter. This parameter balances between the amount of allowed noise and the regularity dictated by $R$.
In this paper, we consider a quadratic data fidelity term taking the form
\begin{equation}
	F(x,y) = \dfrac{1}{2} \normd{y - \Phi x}^2 ~.
\end{equation}
If it were to be interpreted in Bayesian language, this data fidelity would amount to assuming that the noise is white Gaussian.

The popular Thikonov class of regularizations corresponds to quadratic forms $R(x) = \dotp{x}{Kx}$, where $K$ is a symmetric semidefinite positive kernel. This typically induces some kind of uniform smoothness on the recovered vector. To capture the more intricate geometrical complexity of image structures, non-quadratic priors are required, among which sparse regularization through the $\lun$ norm has received a considerable interest in the recent years. This non-smooth regularization is at the heart of this paper.

\subsection{Sparse $\lun$-Analysis Regularization} 
\label{sub:intro-sparse}

We call a dictionary $D = (d_i)_{i=1}^P$ a collection of $P$ atoms $d_i \in \RR^N$. The dictionary may be redundant in $\RR^N$, in which case $P > N$ and $\Phi$ if surjective if it has full row rank. $D$ can also be viewed as a linear mapping from $\RR^P$ to $\RR^N$ which is used to \emph{synthesize} a signal $x \in \Span(D) \subseteq \RR^N$ as $x = D \alpha = \sum_{i=1}^P \alpha_i d_i$, where $\alpha$ is not uniquely defined if $D$ is a redundant dictionary.

\medskip

The $\lun$ analysis regularization in a dictionary $D$ corresponds to using $R = R_A$ in \eqref{eq:reg-noise} where
\begin{equation}
	R_A(x) = \normu{D^* x} .
\end{equation}
This leads us to the following minimization problem which is the focus of this paper
\begin{equation}\label{eq:lasso-a}\tag{\lassotag}
  \solS \in \uArgmin{x \in \RR^N} \minform .
\end{equation}
Since the objective function in \lasso is proper (i.e. not infinite everywhere), continuous and convex, the set of (global) minimizers of \lasso is nonempty and compact if, and only if,
\begin{equation}\label{eq:H0}\tag{$H_0$}
  \Ker \Phi \cap \Ker D^*= \ens{0} ,
\end{equation}
All throughout this paper, we suppose that this condition holds.

\medskip 

The most popular $\lun$-analysis sparsity-promoting regularization is the total variation, which was first introduced for denoising (in a continuous setting) in~\cite{rudin1992nonlinear}.
In a discrete setting, it corresponds to taking $D^*$ as a finite difference discretization of the gradient operator.
The corresponding prior $\J_A$ favors piecewise constant signals and images.
A comprehensive review of total variation regularization can be found in~\cite{chambolle2009tv}.

The theoretical properties of total variation regularization have been previously investigated.
A distinctive feature of this regularization is its tendency to yield a staircasing effect, where discontinuities not present in the original data might be artificially created by the regularization.
This effect has been studied by Nikolova in the discrete case in a series of papers, see e.g. \cite{nikolova2000local}, and by Ring in \cite{Ring2000} in the continuous setting. The stability of the discontinuity set of the solution of the 2-D continuous total variation denoising is studied in~\cite{caselles2008discontinuity}.

\medskip 

When $D$ is the standard basis, i.e. $D = \Id$, the analysis sparsity regularization $R_A$ specializes to the so-called synthesis regularization. The corresponding variational problem \lasso is referred to as the Lasso problem in the statistics community~\cite{tibshirani1996regre} and Basis-Pursuit DeNoising (BPDN) in the signal processing community~\cite{chen1999atomi}. Despite synthesis and analysis regularizations both minimize objective functions involving the $\lun$ norm, the properties of their respective minimizers may differ significantly. Some insights on the relation and distinction between analysis and synthesis-based sparsity regularizations were first given in \cite{elad2007analy}. When $D$ is orthogonal, and more generally when $D$ is square and invertible, analysis and synthesis regularizations are equivalent in the sense that the set of minimizers of one problem can be retrieved from that of an equivalent form of the other through a bijective change of variable. However, when $D$ is redundant, synthesis and analysis regularizations depart significantly. 

While the theoretical guarantees of synthesis $\lun$-regularization have been extensively studied, the analysis case remains much less investigated ~\cite{candes2010compr,grasmair2011linear,nam2011the-c,vaiter-analysis}.

\subsection{Geometrical Insights into $\lun$-Analysis Regularization} 
\label{sub:intro-geometry}

In the synthesis prior, sparsity of a vector $\alpha \in \RR^P$ is measured in terms of its $\lzero$ pseudo-norm, or equivalently the cardinality of its support $\supp(\alpha)$, i.e.
\begin{equation*}
  \norm{\alpha}_0 = \abs{\supp(\alpha)} = \abs{\enscond{i \in \ens{1, \cdots, P}}{\alpha_i \neq 0}} .
\end{equation*}

In the analysis prior, the sparsity is measured on the correlation vector $D^* x$. It then appears natural to keep track of the support of $D^* x$. To fix terminology, we define this support and its complement.
\begin{defn}
  The \emph{\dsup} $I$ (resp. \emph{\dcosup} $J$) of a vector $x \in \RR^N$ is defined as $I =\supp(D^* x)$ (resp. $J = I^c=\{1,\dots,P\} \setminus I$).
\end{defn}

A vector $x$ with a \dcosup $J$ is then such that the correlations between this vector and the columns of $D_J$ are zero. This is equivalent to saying that $x$ lives in a subspace $\GJ$ defined as follows.
\begin{defn}\label{defn:cospace}
  Given $J$ a subset of $\ens{1\cdots P}$, the \emph{cospace} $\GJ$ is defined as
  \begin{equation*}
    \GJ = \Ker D_J^* .
  \end{equation*}
\end{defn}
It was shown in \cite{vaiter-analysis} that the subspace $\GJ$ plays a pivotal role in robustness and identifiability guarantees of \lasso.

\medskip

In fact, the subspaces $\GJ$ carry all necessary information to characterize signal models of sparse analysis type. More precisely, vectors of cosparsity $k=\abs{J}$ live in an \emph{union of subspaces}
\begin{equation*}
  \Theta_k = \enscond{\GJ}{J \subseteq \ens{1\cdots P} \qandq \dim \GJ = k} ~,
\end{equation*}
and the signal space $\RR^N$ can be decomposed as $\RR^N = \bigcup_{k \in \ens{0, \dots, N}} \Theta_k$. This model has been introduced in \cite{nam2011the-c} under the name \emph{cosparse model}.

For synthesis sparsity, i.e. $D=\Id$, $\Theta_k$ are nothing but the set of axis-aligned subspaces of dimension $k$.
For the 1-D total variation prior, where $D$ corresponds to finite forward differences, $\Theta_k$ is the set of piecewise constant signals with $k-1$ steps.
A few other examples of subspaces $\Theta_k$, including those corresponding to translation invariant wavelets, are discussed in~\cite{nam2011the-c}. More general union of subspaces models have been introduced in sampling theory to model various types of non-linear signal ensembles, see e.g.~\cite{lu2008a-the}.


\subsection{Local Behavior of Minimizers} 
\label{sub:intro-local}

Local variations and sensitivity/perturbation analysis of problems in the form of \eqref{eq:reg-noise} is an important topic in optimization and optimal control. Comprehensive monograph on the subject are~\cite{bonnans2000perturbation,mordukhovich1992sensitivity}.
In this paper, we focus on the variations with respect to the regularization parameter $\lambda$ and the observations $y$, i.e we study the set-valued mapping $(\lambda,y) \mapsto \Mm_\lambda(y)$ where $\Mm_\lambda(y)$ is the set of minimizers of \eqref{eq:reg-noise}.

In the synthesis case ($D=\Id$) with $Q > N$, the work of~\cite{osborne2000lasso,osborne2000new} showed that, for a fixed $y$, the mapping $\lambda \mapsto \solS$ is piecewise affine, i.e. the solution path is polygonal. Further, they characterized changes in the solution $\solS$ at vertices of this path. Based on these observations, they presented the homotopy algorithm, which follows the solution path by jumping from vertex to vertex of this polygonal path. This idea was extended to the underdetermined case in~\cite{malioutov2005homotopy,donoho2008fast}. A homotopy-type scheme was proposed in \cite{tibshirani2011solution} for sparse $\lun$-analysis regularization in the overdetermined case ($Q > N$). We will discuss the latter work in more detail
in Section~\ref{sec:related}.


\subsection{Risk Estimation and Parameter Selection} 
\label{sub:risk}

This paper also addresses unbiased estimation of the $\ldeux$-risk when recovering a vector $x_0 \in \RR^N$ from the measurements $y$ in \eqref{eq:linear-problem}, e.g. by solving \eqref{eq:reg-noise}, under the assumption that $w$ is white Gaussian noise.
A central concept for risk estimation is that of the
degrees of freedom (DOF). 
Let $\solTY$ be an estimator of $x_0$ from \eqref{eq:linear-problem}, parameterized by some parameters $\theta$. 
The DOF of such an estimator was defined by Efron~\cite{efron1986biased} as
\begin{equation*}
  \DOF_{\theta}= \sum_{i=1}^Q \frac{\cov_w(y_i, (\Phi \solTY)_i)}{\sigma^2} ~.
\end{equation*}
The DOF is generally used to quantify the complexity of a statistical modeling procedure.
It plays a central role in many model validation and selection criteria, e.g.~%
Mallows' $C_{p}$ (Mallows \cite{mallows}),
AIC (Akaike information criterion~\cite{akaike1973information}),
BIC (Bayesian information criterion~\cite{schwarz1978estimating}),
GCV (generalized cross-validation~\cite{golub1979generalized}) or
SURE (Stein Unbiased Risk Estimator~\cite{stein1981estimation}).
In the spirit of the SURE theory, a good unbiased estimator of the DOF is sufficient to provide an unbiased estimate
of the $\ldeux$ risk in reconstructing $\Phi x_0$,
i.e.~the \emph{prediction} risk
$\EE_w(\normd{\Phi \solTY - \Phi x_0}^2)$.
For instance, the SURE is given by
\begin{align}
  \label{eq:sure}
  \SURE(\solTY) \!=\!
  \normd{y - \Phi \solTY}^2
  \!-\! Q \sigma^2
  \!+\! 2 \sigma^2 \widehat{\DOF}_\theta(y)
\end{align}
with
\begin{equation*}
  \widehat{\DOF}_\theta(y) \!=\! \tr \pa{ \!\frac{\partial \Phi \solTY}{\partial y} \!} ~,
\end{equation*}
where $\EE_w(\widehat{\DOF}_\theta(y)) = \DOF_{\theta}$, $\!\frac{\partial \Phi \solTY}{\partial y}$ is the Jacobian matrix of the vector function $y \mapsto \Phi \solTY$ and $\tr$ is the trace operator. \\

The SURE depends solely on $y$, without prior knowledge of $x_0$.
This can prove very useful as a basis
to automatically choose the parameters $\theta$ of the estimator,
e.g.~$\theta=\lambda$ when solving \eqref{eq:reg-noise}, or the smoothing
parameters in families of linear estimates~\cite{li1985sure}
such as for ridge regression or smoothing splines.
In some settings, it has been shown that it offers better accuracy
than GCV and related non-parametric selection techniques
\cite{efron2004estimation}. However, compared to GCV, the
SURE requires the knowledge of the noise variance $\sigma^2$.

The SURE has been widely used in the statistics and signal processing communities
as a principled and efficient way for parameter selection with a variety of non-linear estimators.
For instance, it was used for wavelet denoising \cite{donoho1995adapting,cai2009data,blu2007surelet},
wavelet shrinkage for a class of linear inverse problems \cite{johnstoneinversesure99}
and non-local filtering \cite{vandeville2009sure,duval2011abv,dds2011nlmsap}.

For general linear inverse problems, the estimator of the {prediction} risk and the parameter(s) minimizing it can depart substantially from those corresponding to the \emph{estimation} risk $\EE_w(\normd{\solTY-x_0}^2)$ \cite{rice1986choice}.
To circumvent this difficulty, in \cite{vonesch2008sure}, the authors attempted
to approximate the {estimation} risk by relying on
a regularized version of the inverse of $\Phi$.
However, in general, either $\Phi$ should have a trivial kernel or, otherwise, 
$x_0$ should live outside to $\ker(\Phi)$
to guarantee the existence of an unbiased estimator
of the {estimation} risk~\cite{desbat1995theminimum}.

In \cite{eldar-gsure}, a generalized $\SURE$ ($\GSURE$) has been
developed for noise models within the multivariate canonical exponential
family. This allows one to derive an unbiased estimator of the risk on a projected version of $\solTY$, which in turn covers the case where $\Phi$ has a non-trivial kernel and a part of $x_0$ is in it. Indeed, in the latter scenario, the $\GSURE$ can at best estimate the \emph{projection} risk $\EE_w(\normd{\Pi \solTY-\Pi x_0)}^2)$ where $\Pi$ is the orthogonal projector on $\ker(\Phi)^\perp$.



\subsection{Contributions} 
\label{sub:intro-contribution}

This paper describes the following contributions:
\begin{enumerate}[label=(\roman*)]
  \item \textbf{Local affine parameterization:}
  we show that any solution $\solS$ of \lasso is a piecewise affine function of $(y,\lambda)$. Furthermore, for fixed $\lambda$, and for $y$ outside a set of Lebesgue measure zero, the prediction $\mus$ locally varies along a constant subspace. This is a distinctly novel contribution which generalizes previously known results (see Section~\ref{sub:local_variations} for a detailed discussion). It also forms the cornerstone of unbiased estimation of the DOF. 

  \item \textbf{GSURE:}
  we derive a unifying framework to compute unbiased estimates of several risks in $\ldeux$ sense, for estimators of $x_0$ from $y$ as observed in \eqref{eq:linear-problem} when $w$ is a white Gaussian noise. This framework encompasses for instance the prediction, the projection and the estimation risks (see Section~\ref{sub:gsure} for a discussion to related work).

  \item \textbf{$\lun$-Analysis Unbiased Risk Estimation:}
  combining the results from the previous two contributions, we derive a closed-form expression of an unbiased estimator of the DOF for \lasso, whence we deduce GSURE estimates of the different risks. 

  \item \textbf{Numerical Computation of GSURE:}
  we also address in detail numerical issues that rise when implementing our DOF estimator and GSURE for \lasso. We show that the additional computational effort to compute the DOF estimator (hence the GSURE) from its closed-form is invested in solving simple linear systems. This turns out to be much faster than iterative approaches existing in the literature which are computationally demanding (see Section~\ref{sub:numgsure} for a detailed discussion).

\end{enumerate}

\subsection{Organization of the Paper} 
\label{sub:intro-organization}

The rest of the paper is organized as follows.
Section \ref{sec:contrib-local} and \ref{sub:contrib-gsure} describe each of our main contributions.
Section \ref{sec:related} draws some connections with relevant previous works.
Section \ref{sec:examples} illustrates our results on some numerical examples.
The proofs are deferred to Appendix~\ref{sec:proofs} awaiting inspection by the interested reader.

\subsection{Notation}
We first summarize the main notations used throughout the paper.
We focus on real vector spaces.
The sign vector $\sign(\alpha)$ of $\alpha \in \RR^P$ is
\begin{equation*}
  \forall i \in \ens{1, \dots, P}, \quad
  \sign(\alpha)_i =
  \begin{cases}
    + 1 & \qifq \alpha_i > 0,\\
    0 & \qifq \alpha_i = 0, \\
    - 1 & \qifq \alpha_i < 0.
  \end{cases}
\end{equation*}
Its support is
\begin{equation*}
  \supp(\alpha) = \enscond{i \in \ens{1, \dots, P}}{\alpha_i \neq 0} .
\end{equation*}
For a subset $I \subset E$, $\abs{I}$ will denote its cardinality, and $I^c=E \setminus I$ its complement.

The matrix $M_J$ for $J$ a subset of $\ens{1, \dots, P}$ is the submatrix whose columns are indexed by $J$.
Similarly, the vector $s_J$ is the restriction of $s$ to the entries of $s$ indexed by $J$.

$\tr$ and $\diverg$ are respectively the trace and divergence operators. 
The matrix $\Id$ is the identity matrix, where the underlying space will be clear from the context.
For any matrix $M$, $M^+$ is its Moore--Penrose pseudoinverse and $M^*$ is its adjoint.





\section{Perturbation Theory of $\lun$-Analysis Regularization} 
\label{sec:contrib-local}
Throughout this section, it is important to point out that we only require that the noise vector $w \in \RR^Q$ to be bounded. The fact that it could be deterministic or random is irrelevant here.  

\subsection{Local Affine Parameterization} 
\label{sub:contrib-affine}
Our first contribution derives a local affine parameterization of minimizers of \lasso as functions of $(y,\lambda) \in \RR^Q \times \RR_+$.
To develop our theory, the invertibility of $\Phi$ on $\GJ$ will play a vital role. For this, we need to assume that
\begin{equation}\label{eq:hj}\tag{$H_J$}
  \Ker \Phi \cap \GJ = \ens{0}.
\end{equation}
To intuitively understand the importance of this assumption, think of the ideal case where one wants to estimate a $D$-sparse signal $x_0$ from $y=\Phi x_0+w$, whose \dcosup $J$ is assumed to be known. This can be achieved by solving a least-squares problem. The latter has a unique solution if \eqref{eq:hj} holds. 

Of course, $J$ is not known in general, and one may legitimately ask whether \eqref{eq:hj} is fulfilled for some solution of \lasso. We will provide an affirmative answer to this question in Theorem~\ref{thm:dof}(ii), i.e. there always exists a solution of \lasso such that \eqref{eq:hj} holds.

With assumption \eqref{eq:hj} at hand, we now define the following matrix whose role will be clarified shortly.
\begin{defn}
  Let $J$ be a \dcosup.
  Suppose that \eqref{eq:hj} holds.
  We define the matrix $\AJ$ as
  \begin{equation}\label{eq:aj}
    \AJ = \UJ \pa{\UJ^* \Phi^* \Phi \UJ}^{-1} \UJ^* .
  \end{equation}
  where $\UJ$ is a matrix whose columns form a basis of $\GJ$.
\end{defn}
Observe that the action of $\AJ$ could be rewritten as an optimization problem
\begin{equation*}
  \AJ u = \uargmin{D_J^* x = 0} \frac{1}{2}\norm{\Phi x}^2 - \dotp{x}{u} .
\end{equation*}

\medskip

Let us now turn to sensitivity of the minimizers $\solS$ of \lasso to perturbations of $(y,\lambda)$. More precisely, our aim is to study properties, including continuity and differentiability, of $\solS$ and $\Phi\solS$ as functions of $y$ and $\lambda$. Toward this end, we will exploit the fact that $\solS$ obeys an implicit equation given in Lemma~\ref{lem:sol} (see Appendix~\ref{sub:local}). But as optimal solutions turns out to be not everywhere differentiable (change of the \dsup and thus of the cospace), we will concentrate on a local analysis where $(y,\lambda)$ vary in a small neighborhood that typically avoids non-differentiability to occur. This is exactly the reason why we introduce the transition space $\Hh$ defined below. It corresponds to the set of observation vectors $y$ and regularization parameters $\lambda$ where the cospace $\GJ$ of any solution of \lasso is not stable with respect to small perturbations of $(y,\lambda)$.
\begin{defn}\label{defn:h}
  The \emph{transition space} $\Hh$ is defined as
  \begin{equation*}
    \Hh = \bigcup_{\substack{J \subset \ens{1,\cdots,P} \\ \text{\eqref{eq:hj} holds} }}
          \bigcup_{\substack{K \subset J \\ \Im \PIJ \not\subseteq \Im D_{J \setminus K}}}
          \bigcup_{s_{J^c} \in \ens{-1,1}^{\abs{J^c}}}
          \bigcup_{\sigma_K \in \ens{-1,1}^{\abs{K}}}
          \Hh_{J,K,s_{J^c},\sigma_K},
  \end{equation*}
  where
  \begin{equation*}
    \Hh_{J,K,s_{J^c},\sigma_K} = \enscond{(y,\lambda) \in \RR^Q \times \RR_+}{\Proj_{\Gg_{J \setminus K}}\PIJ y = \Proj_{\Gg_{J \setminus K}}\lambda(\CJJ s_{J^c} - D_K \sigma_K)} ,
  \end{equation*}
  with $\PIJ = \Phi^* (\Phi \AJ \Phi^* - \Id)$, $\CJJ = (\Phi^* \Phi \AJ - \Id)D_{J^c}$ and $\Proj_{\mathcal{G}_{J \setminus K}}$ is the orthogonal projector on $\mathcal{G}_{J \setminus K}$.
\end{defn}

The following theorem summarizes our first sensitivity analysis result on the optimal solutions of \lasso.
\begin{thm}\label{thm:local}
  Let $(y, \lambda) \not\in \Hh$ and let $\solS$ be a solution of $\lasso$.
  Let $I$ and $J$ be the \dsup and \dcosup of $\solS$ and $s = \sign(D^* \solS)$.
  Suppose that \eqref{eq:hj} holds.
  For any $\bar{y} \in \RR^Q$ and $\bar{\lambda} \in \RR_+$, define
  \begin{equation*}
    \solSpLY = \AJ \Phi^* \bar{y} - \bar{\lambda} \AJ D_I s_I .
  \end{equation*}
  There exists an open neighborhood $\Bb \subset \RR^Q \times \RR_+$ of $(y,\lambda)$ such that for every $(\bar y, \bar \lambda) \in \Bb$, $\solSpLY$ is a solution of $(\lassoP{\bar{y}}{\bar{\lambda}})$.
\end{thm}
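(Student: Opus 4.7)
The strategy is to verify the first-order optimality conditions of $(\lassoP{\bar y}{\bar\lambda})$ at the candidate $\solSpLY$ for every $(\bar y,\bar\lambda)$ in a sufficiently small open neighborhood of $(y,\lambda)$. Recall that a point $x^\star$ solves such a problem iff there exists $\bar\eta \in \RR^P$ with $\norm{\bar\eta}_\infty \le 1$, $\bar\eta_{\supp(D^* x^\star)} = \sign(D^* x^\star)_{\supp(D^* x^\star)}$ and $\Phi^*(\Phi x^\star - \bar y) + \bar\lambda D \bar\eta = 0$. I will split the construction into checking the sign/cosupport structure of $\solSpLY$ and producing a valid dual certificate $\bar\eta$.

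First I would dispatch the easy geometric facts. The identity $D_J^* \solSpLY = 0$ is immediate because $\AJ = \UJ(\UJ^*\Phi^*\Phi\UJ)^{-1}\UJ^*$ has range contained in $\GJ = \Ker D_J^*$. The sign condition $\sign(D_I^* \solSpLY) = s_I$ holds on a neighborhood of $(y,\lambda)$ by a pure continuity argument: $(\bar y,\bar\lambda)\mapsto \solSpLY$ is affine and reduces to $\solS$ at the base point, while $D_I^* \solS$ has strictly nonzero entries with signs $s_I$. Combined with $D_J^* \solSpLY = 0$, this shows that $\supp(D^* \solSpLY) \subseteq I$ with the prescribed signs on its support, so setting $\bar\eta_I := s_I$ lies in the correct subdifferential and only $\bar\eta_J \in [-1,1]^{\abs{J}}$ remains to be produced.

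Plugging the formula into the stationarity equation, a direct calculation using the identities $(\Phi^*\Phi\AJ - \Id)\Phi^* = \PIJ$ and $\Phi^*\Phi\AJ D_I = D_I + \CJJ$ reduces the remaining requirement to the linear constraint
\[
D_J \bar\eta_J \;=\; \CJJ s_I - \tfrac{1}{\bar\lambda}\PIJ \bar y .
\]
The first-order optimality of $\solS$ produces some $\eta_J$ with $\norm{\eta_J}_\infty \le 1$ satisfying this equation at the base point $(y,\lambda)$. Let $K := \ens{j \in J : |\eta_j| = 1}$ be the saturated coordinates and $\sigma_K := \eta_K \in \ens{-1,1}^{\abs{K}}$. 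Projecting the base-point equation onto $\Gg_{J\setminus K} = (\Im D_{J\setminus K})^\perp$ annihilates the $D_{J\setminus K} \eta_{J\setminus K}$ term and yields exactly the identity that defines $\Hh_{J,K,s_I,\sigma_K}$ in Definition~\ref{defn:h}.

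The main obstacle, and the only place where the hypothesis $(y,\lambda) \notin \Hh$ actually enters, is upgrading this solvability from the base point to a whole neighborhood. Since the identity above places $(y,\lambda)$ in $\Hh_{J,K,s_I,\sigma_K}$, the contrapositive of the definition of $\Hh$ forces $\Im \PIJ \subseteq \Im D_{J\setminus K}$. Consequently $\PIJ \bar y \in \Im D_{J\setminus K}$ for every $\bar y$, so the right-hand side $r(\bar y,\bar\lambda) - D_K\sigma_K$, where $r(\bar y,\bar\lambda) := \CJJ s_I - \PIJ \bar y/\bar\lambda$, stays in $\Im D_{J\setminus K}$ for all $(\bar y,\bar\lambda)$ (with $\bar\lambda > 0$). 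I would then set $\bar\eta_K := \sigma_K$ and
\[
\bar\eta_{J\setminus K} \;:=\; \eta_{J\setminus K} + D_{J\setminus K}^+\bigl(r(\bar y,\bar\lambda) - r(y,\lambda)\bigr) ,
\]
which furnishes a continuous selection satisfying $D_J \bar\eta_J = r(\bar y,\bar\lambda)$ and reducing to $\eta_J$ at $(y,\lambda)$. Because $|\eta_j| < 1$ strictly on $J \setminus K$, continuity produces an open neighborhood $\Bb$ of $(y,\lambda)$ on which $\norm{\bar\eta_J}_\infty \le 1$, completing the verification of the optimality conditions.
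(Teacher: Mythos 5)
Your proposal is correct and follows essentially the same route as the paper's proof: you reduce optimality of $\solSpLY$ to the linear condition $D_J\bar\eta_J = \CJJ s_I - \tfrac{1}{\bar\lambda}\PIJ\bar y$ (the content of Lemmas~\ref{lem:sol} and~\ref{lem:soldeux}), split $J$ into saturated and non-saturated coordinates, use $(y,\lambda)\notin\Hh$ via the projection onto $\Gg_{J\setminus K}$ to conclude $\Im\PIJ\subseteq\Im D_{J\setminus K}$, and perturb only the strictly non-saturated dual coordinates by $D_{J\setminus K}^+\bigl(r(\bar y,\bar\lambda)-r(y,\lambda)\bigr)$, which is exactly the paper's choice of $\bar\sigma_L$. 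The only difference is cosmetic: you establish the range inclusion before the construction, whereas the paper constructs first under that hypothesis and verifies it afterwards by contradiction.
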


An immediate consequence of this theorem is that, for a fixed $y \in \RR^Q$, if \lasso admits a unique solution $\solS$ for each $\lambda$, then $\ens{\solS: \lambda \in \RR_+}$ identifies a polygonal solution path. As we move along the solution path, the cospace is piecewise constant as a function of $\lambda$, changing only at critical values corresponding to the vertices on the polygonal path.

\subsection{Local Variations of the Prediction} 
\label{sub:contrib-variation-prediction}
We now turn to quantifying explicitly the local variations of the prediction $\mus = \Phi \solS$ with respect to the observation $y$. First, it is not difficult to see that even if \lasso admits several solutions, all of them share the same image under $\Phi$; see Lemma~\ref{lem:unique-image} for a formal proof of this assertion. This allows to denote without ambiguity $\mus$ as a single-valued mapping. Before stating our second sensitivity analysis result, we need to define the restriction to $\RR^Q$ of the transition space $\Hh$.
\begin{defn}
  Let $\lambda \in \RR_+^*$.
  The \emph{$\lambda$-restricted transition space} is
  \begin{equation*}
    \Hh_{\cdot,\lambda} = \enscond{y \in \RR^Q}{(y,\lambda) \in \Hh} .
  \end{equation*}
\end{defn}

\medskip

\begin{thm}\label{thm:dof}
  Fix $\lambda \in \RR_+^*$. Then,
  \begin{enumerate}[label=(\roman*)]
  \item The $\lambda$-restricted transition space $\Hh_{\cdot,\lambda}$ is of Lebesgue measure zero.
  \item For $y \not\in \Hh_{\cdot,\lambda}$, there exists $\solS$ a solution of \lasso with a \dcosup $J$ that obeys \eqref{eq:hj}.
  \item The mapping $y \mapsto \mus$ is of class $C^\infty$ on $\RR^Q \setminus \Hh_{\cdot,\lambda}$ (a set of full Lebesgue measure), and
  \begin{equation}\label{eq:divdof}
    \frac{\partial \mus}{\partial y} = \Phi \AJ \Phi^* ~,
  \end{equation}
  where $J$ is such that \eqref{eq:hj} holds.
  \end{enumerate}
\end{thm}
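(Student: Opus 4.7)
The three claims will be proved in the given order, each leaning on the previous.

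\emph{Part (i): Lebesgue measure zero.} The set $\Hh$ is by construction a \emph{finite} union of sets $\Hh_{J,K,s_{J^c},\sigma_K}$, each of which is the zero set of the affine map
\begin{equation*}
  (y,\lambda) \longmapsto \Proj_{\Gg_{J\setminus K}}\bigl(\PIJ y - \lambda(\CJJ s_{J^c} - D_K\sigma_K)\bigr).
\end{equation*}
Fixing $\lambda$, the $\lambda$-slice of each piece is the affine subspace of $\RR^Q$ defined by $\Proj_{\Gg_{J\setminus K}}\PIJ y = c(\lambda)$. The screening condition $\Im \PIJ \not\subseteq \Im D_{J\setminus K}$ combined with $\Gg_{J\setminus K}=(\Im D_{J\setminus K})^\perp$ says exactly that the linear map $\Proj_{\Gg_{J\setminus K}}\PIJ$ is nonzero, so the slice is a proper affine subspace of $\RR^Q$, hence has Lebesgue measure zero. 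A finite union of null sets is null.

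\emph{Part (ii): existence of a solution satisfying \eqref{eq:hj}.} The idea is to run a standard ``kernel-walk'' argument on minimizers. Start from any minimizer $\solS$ of \lasso with \dcosup $J$. If \eqref{eq:hj} fails, pick $h\in (\Ker\Phi\cap\GJ)\setminus\{0\}$. Because $\Phi h=0$, the data-fidelity is unchanged along $\solS+th$; because $D_J^* h=0$, the entries of $D^*(\solS+th)$ indexed by $J$ stay at zero; and because the signs of $D_i^*\solS$ for $i\in I=J^c$ are nonzero, they are preserved for $|t|$ small. Hence the $\lun$-norm part varies affinely in $t$ and, since $\solS$ is optimal, must in fact be constant, so $\solS+th$ is also a minimizer for small $t$. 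Now \eqref{eq:H0} forbids $h\in\Ker D^*$, so some $D_i^*h\neq 0$ for $i\in I$; increasing $|t|$ until the first such entry of $D^*(\solS+th)$ vanishes yields a new minimizer whose \dcosup strictly contains $J$. Iterating (finitely many times, since $|J|$ strictly increases and is bounded by $P$) terminates at a minimizer for which \eqref{eq:hj} holds. Note this step does not actually use $y\notin \Hh_{\cdot,\lambda}$.

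\emph{Part (iii): smoothness and formula for $\partial \mus/\partial y$.} Fix $y\notin\Hh_{\cdot,\lambda}$. By (ii), pick a minimizer $\solS$ with \dcosup $J$ satisfying \eqref{eq:hj} and let $s=\sign(D^*\solS)$. Theorem~\ref{thm:local} applies at $(y,\lambda)$, so there is an open neighborhood $\Bb$ of $y$ in $\RR^Q$ (with $\lambda$ held fixed) on which
\begin{equation*}
  x^\star(\bar y)=\AJ\Phi^*\bar y - \lambda\AJ D_I s_I
\end{equation*}
is a minimizer of $(\lassoP{\bar y}{\lambda})$. Since Lemma~\ref{lem:unique-image} asserts that $\Phi\solS$ is uniquely determined by $(\bar y,\lambda)$, we conclude
\begin{equation*}
  \mu^\star_{(\bar y,\lambda)} = \Phi x^\star(\bar y) = \Phi\AJ\Phi^*\bar y - \lambda\Phi\AJ D_I s_I
  \quad \text{for all } \bar y\in\Bb.
\end{equation*}
This map is affine in $\bar y$, hence $C^\infty$ on $\Bb$, with constant Jacobian $\Phi\AJ\Phi^*$, proving \eqref{eq:divdof}. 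Since $y\in\RR^Q\setminus\Hh_{\cdot,\lambda}$ was arbitrary and that set has full measure by (i), the conclusion holds on a full-measure open set.

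\emph{Expected obstacle.} Parts (i) and (iii) are essentially bookkeeping once Theorem~\ref{thm:local} and Lemma~\ref{lem:unique-image} are in place. The only delicate point is part (ii): one must verify carefully that the kernel walk along $h$ preserves optimality (not just feasibility) and that termination does produce \eqref{eq:hj}, rather than merely enlarging $J$ indefinitely. Invoking \eqref{eq:H0} to guarantee progress at each step, and the strict monotonicity of $|J|$ to guarantee termination, is the crux.
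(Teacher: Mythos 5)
Your proposal is correct and follows essentially the same route as the paper's proof: part (i) via the same finite-union-of-proper-affine-slices argument (with the screening condition $\Im \PIJ \not\subseteq \Im D_{J\setminus K}$ guaranteeing each slice is proper), part (ii) via the same kernel-walk to the boundary of the sign-preserving interval with termination by monotonicity of the \dcosup, and part (iii) by combining Theorem~\ref{thm:local} with Lemma~\ref{lem:unique-image} to get a locally affine $\bar y \mapsto \mu^\star_\lambda(\bar y)$. The only cosmetic difference is that in part (ii) you argue optimality along the line by affineness of the objective, whereas the paper argues via invariance of the subdifferential; both are valid.
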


\section{Generalized Stein Unbiased Risk Estimator} 
\label{sub:contrib-gsure}

Throughout this section, for our statements to be statistically meaningful, the noise is assumed to be white Gaussian, $w \sim \Nn(0,\sigma^2\Id_Q)$ of bounded variance $\sigma^2$. 

\subsection{GSURE for an Arbitrary Estimator}
\label{sub:contrib-arbitrary}
We first consider an arbitrary estimator $\solTY$ with parameters $\theta$ such that $\muTY=\Phi \solTY$ is a single-valued mapping. We similarly write $\mu_0 =  \Phi x_0$. Of course the results described shortly will apply when the estimator is taken as any minimizer of \lasso, in which case $\theta=\lambda$. \\

We here develop an extended version of $\GSURE$ that unbiasedly estimates the risk of reconstructing $A\mu_0$ with an arbitrary matrix $A \in \RR^{M \times Q}$. This allows us to cover in a unified framework unbiased estimation of several classical risks including the {prediction} risk (with $A = \Id$), the {projection} risk when $\Phi$ is rank deficient (with $A = \Phi^*(\Phi \Phi^*)^+$), and the {estimation} risk when $\Phi$ has full rank (with $A = \Phi^+ = (\Phi^* \Phi)^{-1} \Phi^*$). A quantity that will enter into play in the risk of estimating $A\mu_0$ is the degrees of freedom defined as
\begin{equation*}
  \DOF_{\theta}^A = \sum_{i=1}^Q \frac{\cov_w((A y)_i, (A \muTY)_i)}{\sigma^2} ~.
\end{equation*}

\begin{defn}
\label{def:gsureA}
Let $A \in \RR^{M \times Q}$.
We define the Generalized Stein Unbiased Risk Estimate (GSURE) associated to $A$ as
\begin{align*}
  \GSURE^{A}(\solTY) = &
  \normd{A(y - \muTY)}^2
  - \sigma^2 \tr( A^* A )
  + 2 \sigma^2 \dev{\theta}{A}{y} ~,
\end{align*}
where
\begin{equation*}
  \dev{\theta}{A}{y} = \tr \pa{ A \JacT A^* } ~.
\end{equation*}
\end{defn}

\paragraph{Unbiasedness of the GSURE}
The next result shows that $\GSURE^{A}(\solTY)$ is an unbiased estimator of an appropriate $\ldeux$ risk, and $\dev{\theta}{A}{y}$ is an unbiased estimator  of $\DOF_{\theta}^A$
\begin{thm}\label{thm:gsure}
  Let $A \in \RR^{M \times Q}$.
  Suppose that $y \mapsto \muTY$ is weakly differentiable, so that its divergence is well-defined in the weak sense.
  If $y = \Phi x_0 + w$ with $w \sim \Nn(0,\sigma^2\Id_Q)$,
  then
  \begin{align*}
    \EE_w \GSURE^{A}(\solTY) = \EE_w \pa{ \normd{ A \mu_0 - A \muTY}^2 }
    \qandq
    \EE_w \dev{\theta}{A}{y} = \DOF_{\theta}^A ~.
  \end{align*}
\end{thm}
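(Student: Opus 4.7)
The plan is a direct application of Stein's lemma, after expanding the GSURE around the target risk.

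Using $y = \mu_0 + w$ with $\mu_0 = \Phi x_0$ deterministic, I would first expand
\begin{equation*}
\normd{A(y-\muTY)}^2 = \normd{Aw}^2 + 2\dotp{Aw}{A\mu_0 - A\muTY} + \normd{A\mu_0 - A\muTY}^2 .
\end{equation*}
Isolating the risk on the right-hand side, taking expectation, and using the two elementary identities $\EE_w \normd{Aw}^2 = \sigma^2 \tr(A^*A)$ (Gaussianity of $w$) and $\EE_w \dotp{Aw}{A\mu_0} = 0$ (since $\mu_0$ is deterministic and $\EE_w w = 0$), yields
\begin{equation*}
\EE_w \normd{A\mu_0 - A\muTY}^2 = \EE_w \normd{A(y-\muTY)}^2 - \sigma^2 \tr(A^*A) + 2\EE_w \dotp{w}{A^*A\muTY} .
\end{equation*}

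The core step is then Stein's identity: for $w \sim \Nn(0,\sigma^2\Id_Q)$ and any sufficiently regular weakly differentiable $f:\RR^Q \to \RR^Q$, one has $\EE_w \dotp{w}{f(y)} = \sigma^2 \EE_w \diverg_y f(y)$. Applying this to $f(y) = A^*A\,\muTY(y)$, which is weakly differentiable by hypothesis on $\muTY$, and using the cyclicity of the trace, gives
\begin{equation*}
\EE_w \dotp{w}{A^*A\muTY} = \sigma^2\, \EE_w \tr\pa{A^*A \tfrac{\partial \muTY}{\partial y}} = \sigma^2\, \EE_w \tr\pa{A \tfrac{\partial \muTY}{\partial y} A^*} = \sigma^2 \EE_w \dev{\theta}{A}{y} .
\end{equation*}
Substituting this into the previous display and matching with Definition~\ref{def:gsureA} establishes $\EE_w \GSURE^A(\solTY) = \EE_w \normd{A\mu_0 - A\muTY}^2$.

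The unbiased estimation of the DOF follows from exactly the same ingredients. Writing $Ay = A\mu_0 + Aw$ with $A\mu_0$ deterministic, the definition simplifies as
\begin{equation*}
\sigma^2 \DOF_\theta^A = \sum_{i=1}^Q \cov_w\pa{(Ay)_i, (A\muTY)_i} = \sum_{i=1}^Q \EE_w\left[(Aw)_i (A\muTY)_i\right] = \EE_w \dotp{w}{A^*A\muTY} ,
\end{equation*}
where the second equality uses $\EE_w (Aw)_i = 0$. A second application of Stein's identity to the same $f$ then yields $\sigma^2 \DOF_\theta^A = \sigma^2 \EE_w \dev{\theta}{A}{y}$, which is the second claim.

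The principal obstacle is the rigorous justification of Stein's identity under only weak differentiability of $\muTY$: it rests on an integration by parts against the Gaussian density, which requires the product of $\muTY(y)$ with $\exp(-\normd{y-\mu_0}^2/(2\sigma^2))$ to have controlled behavior at infinity so that boundary terms vanish. This regularity is precisely what is folded into the hypothesis that $\diverg \muTY$ is well-defined in the weak sense; for the $\lun$-analysis estimator, it will be confirmed a posteriori by the piecewise affine structure established in Theorem~\ref{thm:local}, which in particular forces $\muTY$ to grow at most linearly in $y$.
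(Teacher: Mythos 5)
Your proposal is correct and follows essentially the same route as the paper: decompose the quadratic term using $y=\mu_0+w$, kill the cross term $\EE_w\dotp{Aw}{A\mu_0}$, and convert $\EE_w\dotp{w}{A^*A\muTY}$ into the trace term via Stein's lemma applied to $A^*A\muTY$ (the paper's Lemma~\ref{lem:stein}), with the DOF identity obtained from the same covariance computation. The only difference is cosmetic (you expand $\normd{A(y-\muTY)}^2$ directly rather than $\normd{Ay-A\muTY}^2$ term by term), and your closing remark on the regularity needed for the integration by parts is a reasonable addition rather than a gap.
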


\begin{rem}
Theorem~\ref{thm:gsure} can be straightforwardly adapted to deal with any white Gaussian noise
with a non-singular covariance matrix $\Sigma$. It is sufficient to consider the change of variable $y \mapsto \Sigma^{-1/2} y$
and $\Phi \mapsto \Sigma^{-1/2} \Phi$. This is in the same vein as \cite{eldar-gsure}.
\end{rem}

All estimators of the form $\GSURE^B$ with $B$ such that
$B\Phi\!=\!A\Phi$
share the same expectation given by Theorem \ref{thm:gsure}.
Hence, there are several ways to estimate
the risk in reconstructing $A\mu_0$.
For the estimation of the prediction, projection and
estimation risks, we now give the corresponding expressions
and associated estimators (with subscript notations)
as direct consequences of Theorem~\ref{thm:gsure}:

\begin{itemize}[leftmargin=*]
\item $A=\Id$: in which case $\GSURE^\Id$ becomes
\begin{align*}
  \GSURE_{\Phi}(\solTY) =
  \normd{y - \muTY}^2
  - Q \sigma^2
  + 2 \sigma^2 \tr \pa{ \JacT }
\end{align*}
which provides an unbiased estimate of the prediction risk
\begin{align*}
  \risk_{\Phi}(x_0) = \EE_w\normd{\Phi\solTY - \Phi x_0}^2 ~.
\end{align*}
This coincides with the classical $\SURE$ defined in \eqref{eq:sure}.
\item $A=\Phi^*(\Phi \Phi^*)^+$: when $\Phi$ is rank deficient, $\Pi=\Phi^*(\Phi\Phi^*)^+\Phi$ is the orthogonal projector on $\ker(\Phi)^\perp=\Im(\Phi^*)$.
Denoting $x_{\ML}(y) = \Phi^*(\Phi\Phi^*)^+ y$ the maximum likelihood estimator (MLE), $\GSURE^{\Phi^*(\Phi \Phi^*)^+}$ becomes 
\begin{align*}
  \GSURE_\Pi(\solTY) \! = \! \normd{x_{\ML}(y) - \Pi \solTY}^2
  \!-\! \sigma^2 \tr \!\pa{ (\Phi \Phi^*)^{+} } \\
  \!+\! 2 \sigma^2 \tr \!\pa{ \!(\Phi\Phi^*)^+ \JacT } \!.
\end{align*}
It provides an unbiased estimate of the {projection} risk
\begin{align*}
  \risk_{\Pi}(x_0) = \EE_w\normd{\Pi\solTY - \Pi x_0}^2 ~.
\end{align*}

If $\Phi$ is the synthesis operator of a Parseval tight frame, i.e.~$\Phi \Phi^* = \Id$, the {projection} risk coincides with the {prediction} risk and so do the corresponding $\GSURE$ estimates
\begin{align*}
  \risk_{\Pi}(x_0) = \risk_{\Phi}(x_0)
  \quad \text{and} \quad
  \GSURE_{\Pi}(\solTY) = \GSURE_\Phi(\solTY) ~.
\end{align*}

It is also worth noting that if $\solTY$ never lies in $\ker(\Phi)$, then $\risk_{\Pi}(x_0)$ coincides with the estimation risk up to the additive constant $\normd{(\Id-\Pi)x_0}^2$. 

\item $A=(\Phi^* \Phi)^{-1}\Phi^*$: in this case $\Phi$ has full rank, and the mapping $y \mapsto \solTY$ is single-valued and weakly differentiable. The maximum likelihood estimator is now $x_{\ML}(y) = (\Phi^*\Phi)^{-1} \Phi^* y$ , and $\GSURE^{(\Phi^* \Phi)^{-1} \Phi^*}$ takes the form
\begin{align*}
  \GSURE_\Id(\solTY) = \normd{x_{\ML}(y) - \solTY}^2
   -  \sigma^2 \tr \pa{ (\Phi^* \Phi)^{-1} }\\
   +  2 \sigma^2 \tr\pa{ \Phi (\Phi^*\Phi)^{-1} \dfrac{\partial \solTY}{\partial y} } ~ .
\end{align*}
This is an unbiased estimator of the {estimation} risk given by
\begin{align*}
  \risk_{\Id}(x_0) = \EE_w\normd{\solTY - x_0}^2 ~.
\end{align*}
\end{itemize}

\paragraph{Reliability of the GSURE}
We now assess the reliability of the $\GSURE$ by computing
the expected squared-error between $\GSURE^A(\solTY)$ and the true
squared-error on $A \mu_0$
\begin{align*}
  \SE^A(\solTY) = \normd{ A \mu_0 - A \muTY}^2 ~.
\end{align*}

\begin{thm}\label{thm:reliability}
  Under the assumptions of Theorem~\ref{thm:gsure},
  we have
  \begin{align*}
    \EE_w&\left[ \left(\GSURE^A(\solTY) - \SE^A(\solTY)\right)^2\right]
    \!= \\
    &\hspace{1em} 2 \sigma^4 \tr\left[(A^* A)^2\right] \\
    &+
    4 \sigma^2 \EE_w \normd{A^* A (\mu_0 - \muTY)}^2 \\
    & -
    4 \sigma^4
    \EE_w \!\left( \tr\left[ A \JacT A^* A \left( 2 \Id - \JacT \right) A^* \right] \right)~.
  \end{align*}
\end{thm}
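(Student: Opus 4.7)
The plan is to split $T := \GSURE^A(\solTY) - \SE^A(\solTY)$ into two naturally centered pieces and compute the expectation of each via iterated Stein integration by parts. Writing $y = \Phi x_0 + w$ and setting $M := A^*A$, $c := \sigma^2\tr(M)$, $v(w) := A^*A(\muTY-\mu_0)$, and $\tau(w) := \tr(M\JacT)$, the identity $A(y-\muTY) = Aw - A(\muTY-\mu_0)$ rearranges the expression of $T$ into
\begin{equation*}
T \;=\; S \;+\; 2\Delta, \qquad S := w^*Mw - c, \qquad \Delta := \sigma^2\tau - w^*v.
\end{equation*}
The standard Gaussian moment formula gives $\EE[S]=0$, while Stein's lemma applied to $v$ (whose Jacobian is $M\JacT$, so $\diverg(v)=\tau$) gives $\EE[w^*v]=\sigma^2\EE[\tau]$, hence $\EE[\Delta]=0$. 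Therefore $\EE[T^2] = \EE[S^2]+4\EE[S\Delta]+4\EE[\Delta^2]$, and the three expectations are handled separately.

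First, $\EE[S^2]$ is the variance of the Gaussian quadratic form $w^*Mw$, namely $2\sigma^4\tr(M^2)=2\sigma^4\tr[(A^*A)^2]$, which is the first summand of the claim. For $\EE[\Delta^2]$, I expand $\Delta^2 = \sigma^4\tau^2 - 2\sigma^2\tau\,w^*v + (w^*v)^2$ and apply Stein's lemma three times: to $w\mapsto\tau(w)v(w)$, to $w\mapsto(w^*v)v(w)$, and to $w\mapsto J_v(w)v(w)$ where $J_v := \partial v/\partial w = M\JacT$ (using $\diverg(J_v v) = v^*\nabla(\diverg v) + \tr(J_v^2)$). Substituting back, the contributions involving $\EE[\tau^2]$ and $\EE[v^*\nabla\tau]$ cancel identically and one is left with
\begin{equation*}
\EE[\Delta^2] \;=\; \sigma^2\EE\bigl[\normd{v}^2\bigr] + \sigma^4\EE\bigl[\tr((M\JacT)^2)\bigr] \;=\; \sigma^2\EE\bigl[\normd{A^*A(\mu_0-\muTY)}^2\bigr] + \sigma^4\EE\bigl[\tr((M\JacT)^2)\bigr].
\end{equation*}

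The delicate term is $\EE[S\Delta] = \sigma^2\EE[S\tau] - \EE[S\,w^*v]$. Starting from the second-order Stein identity $\EE[S\,f(w)] = \sigma^4\EE[\tr(M\,\nabla^2 f)]$ (a consequence of $\EE[w_iw_j f] = \sigma^2\delta_{ij}\EE[f]+\sigma^4\EE[\partial_i\partial_j f]$), one obtains $\sigma^2\EE[S\tau] = \sigma^6\sum_{i,j,k,l}M_{ij}M_{kl}\EE[\partial_i\partial_j\partial_k\mu_l]$. For $\EE[S\,w^*v]$, the Hessian of $g(w)=w^*v(w)$ decomposes as $\nabla^2 g = J_v + J_v^* + R(w)$ with $R$ linear in $w$; the symmetric piece yields $2\sigma^4\EE[\tr(MJ_v)] = 2\sigma^4\EE[\tr(M^2\JacT)]$, while one extra Stein step on $R$ reproduces, by commutativity of partial derivatives and index relabeling, the same third-order quantity as $\sigma^2\EE[S\tau]$. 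These third-order contributions cancel, leaving
\begin{equation*}
\EE[S\Delta] \;=\; -\,2\sigma^4\EE\bigl[\tr(M^2\JacT)\bigr].
\end{equation*}

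Combining the three expectations yields, via the cyclic trace identity
\begin{equation*}
-8\sigma^4\tr(M^2\JacT) + 4\sigma^4\tr((M\JacT)^2) \;=\; -4\sigma^4\tr\bigl[M\JacT M(2\Id-\JacT)\bigr] \;=\; -4\sigma^4\tr\bigl[A\JacT A^*A(2\Id-\JacT)A^*\bigr],
\end{equation*}
exactly the announced formula. The main obstacle is the algebraic cancellation of third-order partial-derivative terms in $\EE[S\Delta]$: this is a bookkeeping identity relying crucially on symmetry of $M$ and commutativity of mixed partials, and in the weak-differentiability setting of Theorem~\ref{thm:gsure} it is handled rigorously via a standard approximation of $\muTY$ by a smooth sequence to legitimately iterate Stein's lemma at second order.
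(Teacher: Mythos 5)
Your decomposition $T=S+2\Delta$ with $S=\dotp{w}{A^*Aw}-\sigma^2\tr(A^*A)$ and $\Delta=\sigma^2\tr\bigl(A^*A\JacT\bigr)-\dotp{w}{A^*A(\muTY-\mu_0)}$ is, up to where the cross term $\dotp{Aw}{A\mu_0}$ is parked, the same skeleton as the paper's proof (which writes $\GSURE^A-\SE^A=(\GSURE^A-Q^A)+(Q^A-\SE^A)$ with $\GSURE^A-Q^A=\normd{Ay}^2-\EE_w\normd{Ay}^2$), and your three expectations do recombine correctly into the stated formula: $\EE_w[S^2]=2\sigma^4\tr[(A^*A)^2]$, your value of $\EE_w[\Delta^2]$ matches the cited Property~1 identity, and $4\EE_w[S\Delta]=-8\sigma^4\EE_w\tr[(A^*A)^2\JacT]$ combines with the $\tr[(A^*A\JacT)^2]$ term exactly as you say.

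The one substantive weakness is your evaluation of $\EE_w[S\Delta]$. You invoke a second-order Stein identity $\EE_w[Sf]=\sigma^4\EE_w\tr(A^*A\,\nabla^2 f)$ with $f=\tau$ and $f=\dotp{w}{v}$, which requires second and third weak derivatives of $y\mapsto\muTY$; the hypotheses of Theorem~\ref{thm:gsure} grant only first-order weak differentiability, and for \lasso{} the map $\muTY$ is piecewise affine, so the individual third-order terms you cancel are singular distributions rather than integrable functions. A smoothing argument can in principle rescue this, but it is not routine since those terms blow up individually before cancelling. The cleaner fix --- and what the paper effectively does in its computation of $T_1$ and $T_2$ --- is to apply first-order Stein's lemma in $w_j$ to the map $w\mapsto S(w)\,v_j(w)$: this gives $\EE_w[S\,\dotp{w}{v}]=2\sigma^2\EE_w\dotp{A^*Aw}{v}+\sigma^2\EE_w[S\,\tau]$, so the problematic $\sigma^2\EE_w[S\,\tau]$ cancels identically against the first term of $\EE_w[S\Delta]$ without ever being computed, and one more first-order Stein application yields $\EE_w[S\Delta]=-2\sigma^2\EE_w\dotp{w}{A^*Av}=-2\sigma^4\EE_w\tr[(A^*A)^2\JacT]$ using only the regularity actually assumed. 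With that substitution your argument is complete; the $\EE_w[\Delta^2]$ step still uses second derivatives, but there you are on the same footing as the paper, which outsources that identity to a citation.
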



\subsection{GSURE for $\lun$-Analysis Regularization} 
\label{sub:contrib-risk}

We now specialize the previous results to the case where the estimator $\solTY$ is a solution of \lasso; i.e. $\solTY=\solS$ and $\muTY=\mus$. For notational clarity and to highlight the dependency of $\dim(\GJ)$ on $y$, for $y \not\in \Hh_{\cdot,\lambda}$, we write $d(y) = \dim(\GJ)$ where $J$ is the \dcosup of any solution $\solS$ such that \eqref{eq:hj} holds. We then obtain the following corollary as a consequence of Theorems~\ref{thm:dof} and \ref{thm:gsure}.

\begin{cor}\label{cor:dof}
  Let $y = \Phi x_0 + w$ with $w \sim \Nn(0,\sigma^2\Id_Q)$. Then $\mus$ is weakly differentiable and 
  \begin{align*}
    \GSURE_{\Phi}(\solS) = &
    \normd{y - \mus}^2
    - Q \si^2
    + 2 \si^2 d(y) ,\\
    \GSURE_\Pi(\solS) = & \normd{x_{\ML}(y) - \Pi \solS}^2
    - \si^2 \tr( (\Phi \Phi^*)^{+} )
    + 2 \si^2 \tr( \Pi \AJ ) ,\\
    \GSURE_\Id(\solS) = & \normd{x_{\ML}(y) - \solS}^2
    - \si^2 \tr( (\Phi^* \Phi)^{-1} )
    + 2 \si^2 \tr( \AJ ) ~ .
  \end{align*}
  Moreover,
  $d(y)$ is an unbiased estimator of the DOF of \lasso, i.e.
  \begin{equation*}
    \DOF_{\lambda} = \DOF_{\lambda}^{\Id} = \EE_w d(y) .
  \end{equation*}
\end{cor}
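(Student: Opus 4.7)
The plan is to derive the corollary as a direct specialization of Theorem~\ref{thm:gsure} to the estimator $\solS$, using Theorem~\ref{thm:dof} to supply the Jacobian $\partial\mus/\partial y=\Phi\AJ\Phi^*$ and then reducing the trace factors $\tr(A\,\partial\mus/\partial y\,A^*)$ for each of the three choices of $A$.

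Before invoking Theorem~\ref{thm:gsure}, I must first check its hypothesis that $y\mapsto\mus$ is weakly differentiable. The natural route is to establish global $1$-Lipschitz continuity. Rewriting the objective of \lasso in the variable $\mu=\Phi x$ via the convex value function $g(\mu)=\inf\{\normu{D^*x}:\Phi x=\mu\}$ (with the convention $g\equiv+\infty$ outside $\Im\Phi$), one recognizes $\mus$ as the proximal mapping of $\lambda g$ at $y$. Assumption \eqref{eq:H0} ensures that $\normu{D^*\cdot}$ is coercive along the fibers of $\Phi$, so that $g$ is proper, convex, and lower semicontinuous, and the resulting prox is firmly non-expansive. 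Combined with the a.e.\ identity of Theorem~\ref{thm:dof}(iii), this $1$-Lipschitz property promotes pointwise differentiability to weak differentiability and identifies the weak derivative with $\Phi\AJ\Phi^*$ almost everywhere.

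With weak differentiability secured, I plug the explicit Jacobian into Theorem~\ref{thm:gsure} and simplify using only the cyclic property of the trace together with the structure $\AJ=\UJ(\UJ^*\Phi^*\Phi\UJ)^{-1}\UJ^*$. For $A=\Id$, cyclicity collapses $\tr(\Phi\AJ\Phi^*)$ to $\tr(\Id_{d(y)})=d(y)$, yielding $\GSURE_\Phi$ and, via the second conclusion of Theorem~\ref{thm:gsure}, the unbiased-DOF identity $\EE_w d(y)=\DOF_\lambda^{\Id}=\DOF_\lambda$. For $A=\Phi^*(\Phi\Phi^*)^+$, the pseudoinverse identity $(\Phi\Phi^*)^+\Phi\Phi^*(\Phi\Phi^*)^+=(\Phi\Phi^*)^+$ together with $\Pi^2=\Pi$ reduce the two relevant traces to $\tr(\Pi\AJ)$ and $\tr((\Phi\Phi^*)^+)$, producing $\GSURE_\Pi$. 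For $A=(\Phi^*\Phi)^{-1}\Phi^*$ (applicable only when $\Phi$ has full column rank), the interior factors $\Phi^*\Phi$ and $(\Phi^*\Phi)^{-1}$ telescope under cyclicity to give $\tr(\AJ)$ and $\tr((\Phi^*\Phi)^{-1})$, producing $\GSURE_\Id$.

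The only step with real content is the weak-differentiability check: Theorem~\ref{thm:dof}(iii) on its own only provides smoothness off a Lebesgue-null set, which is insufficient to rule out a singular contribution to the distributional derivative supported on $\Hh_{\cdot,\lambda}$. The proximal/firm-non-expansiveness argument sketched above is precisely what closes this gap; once it is in place, the remaining work is purely algebraic manipulation of traces.
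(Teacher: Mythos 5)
Your proposal is correct and follows the same overall route as the paper's proof: specialize Theorem~\ref{thm:gsure} to the estimator $\solS$, feed in the Jacobian $\Phi \AJ \Phi^*$ from Theorem~\ref{thm:dof}(iii), and reduce the trace terms for the three choices of $A$. Two points differ. For the trace, the paper identifies $V = \Phi \AJ \Phi^*$ as the orthogonal projector onto $\Phi(\GJ)$ and uses $\tr V = \rank V = \dim(\GJ)$ (injectivity of $\Phi$ on $\GJ$ under \eqref{eq:hj}), whereas your cyclic-trace computation $\tr\pa{\Phi \UJ (\UJ^* \Phi^* \Phi \UJ)^{-1} \UJ^* \Phi^*} = \tr(\Id_{d(y)})$ reaches the same conclusion purely algebraically; both are fine and essentially equivalent. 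The more substantive difference is the weak-differentiability step. The paper's proof simply asserts that Theorem~\ref{thm:dof}(iii) gives differentiability almost everywhere and then invokes Theorem~\ref{thm:gsure}; as you correctly point out, smoothness off the Lebesgue-null set $\Hh_{\cdot,\lambda}$ does not by itself exclude a singular part of the distributional derivative supported on that set, which is exactly what Stein's lemma needs to be ruled out. Your identification of $\mus$ with the proximal map of the marginal function $g(\mu) = \inf\enscond{\normu{D^* x}}{\Phi x = \mu}$ --- proper, convex and lower semicontinuous under \eqref{eq:H0} --- makes $y \mapsto \mus$ firmly non-expansive, hence globally $1$-Lipschitz, hence weakly differentiable with weak Jacobian equal to the classical one almost everywhere. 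This closes a gap that the paper leaves implicit rather than deviating from its argument, and the remaining algebra for $\GSURE_\Phi$, $\GSURE_\Pi$ and $\GSURE_\Id$ matches the paper's.
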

In particular, this result states that $\dim(\GJ)$ is an unbiased estimator of the DOF of \lasso response without requiring any assumption to ensure uniqueness of $\solS$. This DOF estimator formula is valid everywhere except on a set of (Lebesgue) measure zero.

\medskip

Building upon Theorems~\ref{thm:dof} and \ref{thm:reliability}, we derive the relative reliability of the $\GSURE$ for \lasso, and show that it decays with the number of measurements at the rate $O(1/Q)$.
\begin{cor}\label{cor:reliability}
  Let $A \in \RR^{M \times Q}$ and $y = \Phi x_0 + w$ with $w \sim \Nn(0,\sigma^2\Id_Q)$.
  Then
  \begin{align*}
    \EE_w\left[ \left(\frac{\GSURE^A(\solS) - \SE^A(\solS)}{Q \sigma^2}\right)^2\right]
    = O\left(\frac{\norm{A}^4}{Q}\right)~.
  \end{align*}
  where $\norm{A}$ is the spectral norm of $A$.
  In particular, if $\norm{A}$ is independent of $Q$, the decay rate of the relative reliability is $O(1/Q)$.
\end{cor}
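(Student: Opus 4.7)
The plan is to specialize Theorem~\ref{thm:reliability} to the analysis estimator, divide the resulting identity by $Q^2\sigma^4$, and bound each of the three terms separately. The key structural fact I will use is that, by Theorem~\ref{thm:dof}(iii), for $y\notin\Hh_{\cdot,\lambda}$ the Jacobian is
\[
\JacS \;=\; \Phi\AJ\Phi^{*},
\]
and a direct inspection of \eqref{eq:aj} shows that this matrix is the orthogonal projector onto $\Phi\GJ$; in particular its operator norm is at most $1$ and $2\Id-\JacS$ has operator norm at most $2$. This boundedness is what will make every dimension-dependent trace grow at most linearly in $Q$.

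First, for the deterministic trace term, I would use $\tr[(A^{*}A)^{2}]\le Q\,\|A^{*}A\|^{2}=Q\|A\|^{4}$, so that
\[
\frac{2\sigma^{4}\tr[(A^{*}A)^{2}]}{Q^{2}\sigma^{4}} \;=\; O\!\left(\frac{\|A\|^{4}}{Q}\right).
\]
Second, for the third term in Theorem~\ref{thm:reliability}, since $\JacS$ and $2\Id-\JacS$ are symmetric with operator norms bounded by $1$ and $2$ respectively, the inner matrix $A\JacS A^{*}A(2\Id-\JacS)A^{*}$ has operator norm at most $2\|A\|^{4}$, so its trace (on an at-most-$Q$-dimensional space) is bounded in absolute value by $2Q\|A\|^{4}$; taking expectation preserves this bound, and division by $Q^{2}\sigma^{4}$ gives again $O(\|A\|^{4}/Q)$.

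The real work concerns the second, stochastic, term
\[
\frac{4\sigma^{2}\EE_{w}\|A^{*}A(\mu_{0}-\mus)\|^{2}}{Q^{2}\sigma^{4}}
\;\le\;\frac{4\|A\|^{4}\,\EE_{w}\|\mu_{0}-\mus\|^{2}}{Q^{2}\sigma^{2}},
\]
so I need an \emph{a priori} bound of the form $\EE_{w}\|\mu_{0}-\mus\|^{2}=O(Q\sigma^{2})$ (treating $\lambda$ and $\|D^{*}x_{0}\|_{1}$ as constants independent of $Q$, as is standard for such asymptotic statements). This I would obtain from the optimality of $\solS$ against the feasible competitor $x_{0}$:
\[
\tfrac{1}{2}\|y-\mus\|^{2}+\lambda\|D^{*}\solS\|_{1}
\;\le\;\tfrac{1}{2}\|w\|^{2}+\lambda\|D^{*}x_{0}\|_{1},
\]
which, after expanding $y-\mus=w-(\mus-\mu_{0})$ and rearranging, yields
\[
\tfrac{1}{2}\|\mus-\mu_{0}\|^{2}\;\le\;\langle w,\mus-\mu_{0}\rangle+\lambda\|D^{*}x_{0}\|_{1}.
\]
Applying Cauchy--Schwarz and absorbing $\tfrac{1}{4}\|\mus-\mu_{0}\|^{2}$ into the left-hand side gives $\|\mus-\mu_{0}\|^{2}\le C(\|w\|^{2}+\lambda\|D^{*}x_{0}\|_{1})$, and taking expectation with $\EE_{w}\|w\|^{2}=Q\sigma^{2}$ yields the required $O(Q\sigma^{2})$. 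Combining the three contributions then produces the claimed $O(\|A\|^{4}/Q)$ rate.

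The main obstacle, I expect, is precisely this second step: controlling $\EE_{w}\|\mu_{0}-\mus\|^{2}$ uniformly in $Q$ without any additional hypothesis on $\Phi$, $D$ or $x_{0}$. The bound above is essentially the only general-purpose estimate available for analysis-type Lasso predictions, and its $Q\sigma^{2}$ behaviour is what ultimately dictates the $1/Q$ rate; making the dependence on $\lambda$ and $\|D^{*}x_{0}\|_{1}$ explicit (and discussing what ``independent of $Q$'' means here) is the only delicate point, the remainder being routine trace and operator-norm bookkeeping.
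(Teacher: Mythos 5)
Your proof is correct and follows essentially the same route as the paper's: specialize Theorem~\ref{thm:reliability}, use that the Jacobian $\Phi \AJ \Phi^*$ is an orthogonal projector together with $\tr[(A^*A)^2]\le Q\norm{A}^4$, and establish the a priori bound $\EE_w\normd{\mu_0-\mus}^2=O(Q\sigma^2)$ from the optimality of $\solS$. The only immaterial differences are that the paper tests optimality against the competitor $0$ rather than $x_0$ (so its constant involves $\normd{\mu_0}^2$ where yours involves $\lambda\normu{D^* x_0}$, both implicitly treated as $O(Q)$), and it discards the third term by showing the trace is nonnegative rather than bounding its absolute value by $2Q\norm{A}^4$.
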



\subsection{Numerical Considerations} 
\label{sub:contrib-numeric}

The remaining obstacle faced when implementing the GSURE formulae of Corollary~\ref{cor:dof} is to compute the divergence term, i.e. the last trace term as given by $\dev{\lambda}{A}{y}=\tr\pa{ A \Phi \AJ \Phi^* A^* }$ (see Definition~\ref{def:gsureA}). However, for large scale-data as in image and signal processing, the computational storage required for the matrix in the argument of the trace would be prohibitive. Additionally, computing $\AJ$ can only be reasonably afforded for small data size. Fortunately, the structure of $\dev{\lambda}{A}{y}$ and the definition of $\AJ$ allows to derive an efficient and principled way to compute the trace term. This is formalized in the next result. 

\begin{prop}\label{prop:gsure_computation}
  One has
  \begin{equation}\label{eq-prop-calcul-1}
    \dev{\lambda}{A}{y} =
    \EE_Z( \dotp{\nu(Z)}{\Phi^* A^* A Z} )
  \end{equation}
  where $Z \sim \Nn(0,\Id_P)$,
  and where for any $z \in \RR^P$, $\nu = \nu(z)$ solves the following linear system
  \begin{equation}\label{eq-prop-calcul-2}
    \begin{pmatrix}
      \Phi^* \Phi & D_J \\
      D_J^* & 0
    \end{pmatrix}
    \begin{pmatrix}
      \nu \\ \tilde \nu
    \end{pmatrix}
    =
    \begin{pmatrix}
      \Phi^* z \\ 0
    \end{pmatrix}~.
  \end{equation}
\end{prop}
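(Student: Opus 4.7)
The plan is to combine three ingredients: the closed-form Jacobian from Theorem~\ref{thm:dof}(iii), the elementary Gaussian trace identity $\EE[Z^\top M Z] = \tr(M)$ for $Z \sim \Nn(0,\Id_Q)$, and the variational characterization of $\AJ$ given just after \eqref{eq:aj}.

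First, I would substitute $\partial \mus/\partial y = \Phi \AJ \Phi^*$ into Definition~\ref{def:gsureA} and use cyclic invariance of the trace to get
\begin{equation*}
  \dev{\lambda}{A}{y} = \tr\bigl(A \Phi \AJ \Phi^* A^*\bigr) = \tr\bigl(\Phi \AJ \Phi^* A^* A\bigr).
\end{equation*}
Next, applying the Gaussian trace identity with $M = \Phi \AJ \Phi^* A^* A$ yields
\begin{equation*}
  \dev{\lambda}{A}{y} = \EE_Z\bigl[Z^\top \Phi \AJ \Phi^* A^* A Z\bigr].
\end{equation*}
Symmetry of $\AJ$ (immediate from \eqref{eq:aj}) lets me rewrite the integrand as the inner product $\dotp{\AJ \Phi^* Z}{\Phi^* A^* A Z}$, so setting $\nu(Z) := \AJ \Phi^* Z$ produces exactly the right-hand side of \eqref{eq-prop-calcul-1}. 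I would note in passing that $Z$ must live in $\RR^Q$, not $\RR^P$, since $\Phi^* z$ requires $z \in \RR^Q$; the $\Id_P$ in the statement appears to be a typographical slip.

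It remains to show that $\nu(z) = \AJ \Phi^* z$ is characterized by the saddle-point system \eqref{eq-prop-calcul-2}. By the variational reformulation stated after \eqref{eq:aj}, $\nu(z)$ is the minimizer of $x \mapsto \tfrac12 \|\Phi x\|^2 - \dotp{x}{\Phi^* z}$ over the linear subspace $\GJ = \Ker D_J^*$. Under hypothesis \eqref{eq:hj}, $\Phi$ is injective on $\GJ$, the quadratic is therefore strictly convex on the feasible set, and the primal minimizer is unique. Introducing a Lagrange multiplier $\tilde\nu \in \RR^{\abs{J}}$ for the constraint $D_J^* x = 0$ and writing first-order optimality produces $\Phi^*\Phi \nu + D_J \tilde\nu = \Phi^* z$ together with $D_J^* \nu = 0$, which is precisely the block system \eqref{eq-prop-calcul-2}. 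The multiplier $\tilde\nu$ may fail to be unique when $D_J$ has a nontrivial kernel, but this is harmless since only the primal block $\nu$ enters \eqref{eq-prop-calcul-1}.

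There is no genuine analytical obstacle: all ingredients are either established upstream (the Jacobian formula) or are standard (Gaussian trace identity and KKT conditions for equality-constrained quadratic minimization). The only point requiring care is matching the abstract argmin definition of $\AJ$ with the concrete saddle-point system \eqref{eq-prop-calcul-2} via Lagrange duality, which is the very computational point of the proposition: it trades the prohibitive explicit assembly of $\AJ$ for one sparse $(N + \abs{J}) \times (N + \abs{J})$ linear solve per Monte-Carlo sample of $Z$.
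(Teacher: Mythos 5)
Your proof is correct and follows essentially the same route as the paper: cyclic invariance of the trace, the Gaussian identity $\tr U = \EE_Z\dotp{Z}{UZ}$ with $\nu(z)=\AJ\Phi^* z$, and the KKT system for the constrained least-squares characterization of $\AJ\Phi^*$ (the paper phrases it as minimizing $\normd{\Phi h - z}^2$ over $\GJ$, which differs from your objective only by a constant). Your observation that $Z$ must live in $\RR^Q$ rather than $\RR^P$ is also right — the $\Id_P$ in the statement is indeed a slip.
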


In practice, the empirical mean estimator is replaced for the expectation in \eqref{eq-prop-calcul-1}, hence giving
\begin{equation}
\label{eq:dofmean}
  \frac{1}{k}
  \sum_{i=1}^k \dotp{ \nu(z_i)}{\Phi^* A^* A z_i} \overset{\mathrm{WLLN}}{\longrightarrow} \dev{\lambda}{A}{y} ~,
\end{equation}
for $k$ realizations $z_i$ of $Z$, where WLNN stands for the Weak Law of Large Numbers. Consequently, the computational bulk of computing an estimate of $\dev{\lambda}{A}{y}$ is invested in solving for each $\nu(z_i)$ the symmetric linear system \eqref{eq-prop-calcul-2} using e.g. a conjugate gradient solver.



\section{Relation to Other Works} 
\label{sec:related}

\subsection{Local variations} 
\label{sub:local_variations}

The local behavior of $\solS$ as a function of $\la$ is already known in the $\lun$-synthesis case, both for the case where $\Phi$ is full rank \cite{osborne2000lasso,osborne2000new}, and $Q < N$ \cite{donoho2008fast}. Our local affine parameterization in Theorem~\ref{thm:local} generalizes these results to the analysis case regardless of the number of measurements. Our result also goes beyond the work of~\cite{tibshirani2011solution} which investigates the overdetermined case with an $\lun$-analysis regularization and develops a homotopy algorithm.

\subsection{Degrees of freedom}
\label{sub:pwdof}

In the synthesis overdetermined case with full rank $\Phi$, \cite{zou2007degrees} showed that the number of nonzero coefficients is an unbiased estimate for the degrees of freedom of \lasso. This was generalized to an arbitrary $\Phi$ in \cite{kachour-dof-preprint}. Corollary~\ref{cor:dof} encompasses these results as special cases by taking $D=\Id$.

For the $\lun$-analysis regularization with full rank $\Phi$, Tibshirani and Taylor \cite{tibshirani2011solution} showed that $\DOF_{\lambda}=\EE_w \dim(\GJ)$, where $J$ is the \dcosup of the unique solution to \lasso. This is exactly the assertion of Corollary~\ref{cor:dof}, since \eqref{eq:hj} is in force when $\rank(\Phi)=N$.

While a first version of this paper was submitted, it came to our attention that Tibshirani and Taylor~\cite[Theorem 3]{tibshirani2012dof}~recently and independently developed an unbiased estimator of the DOF for \lasso that covers the case where $Q < N$. More precisely, they showed that $\dim(\Phi(\GJ))$ is an unbiased estimator of $\DOF(\lambda)$, where $J$ is the \dcosup of {\textit{any}} solution to \lasso. This coincides with Corollary~\ref{cor:dof} when $J$ satisfies \eqref{eq:hj}. Their proof however differs from ours, and in particular, its does not study directly the local behavior of $\solS$ as a function of $y$ or $\lambda$ (Theorem \ref{thm:local}).

\subsection{Generalized Stein Unbiased Risk Estimator}
\label{sub:gsure}

In \cite{eldar-gsure}, the author derived expressions equivalent to $\GSURE_\Pi$ and $\GSURE_\Id$ up to a constant which does not depend on the estimator. However, her expressions were developed separately, whereas we have shown that these GSURE estimates originate from a general result stated in Theorem~\ref{thm:gsure}. Another distinction between our work and \cite{eldar-gsure} lies in the assumptions imposed. 
The author \cite{eldar-gsure} supposes $\solTY$ to be a weakly differentiable function of $\Phi^* y / \sigma^2$.
In contrast, we just require that the prediction $y \mapsto \muTY$ (a single-valued map) is weakly differentiable, as classically assumed in the SURE theory.

\medskip

Indeed, let $u = \Phi^* y / \sigma^2$, and define $\solTY = z_\theta^\star(u)$. Assume that $u \mapsto z_\theta^\star(u)$ is weakly differentiable (and a fortiori a single-valued mapping).

When $\Phi$ is rank deficient, \cite{eldar-gsure} proves unbiasedness of the following estimator of the projection risk
\begin{align*}
  \GSURE_\Pi^{\text{(Eldar)}}(z_\theta^\star(u)) =&
  \normd{\Pi x_0 }^2 + \normd{\Pi z_\theta^\star(u) }^2
  - 2 \dotp{z_\theta^\star(u)}{x_{\ML}(y)} \\
  &+ 2 \tr \pa{\Pi \frac{\partial z_\theta^\star(u)}{\partial u} }.
\end{align*}
Since by assumption $\frac{\partial \Phi z_\theta^\star(u)}{\partial u}=\Phi \frac{\partial z_\theta^\star(u)}{\partial u}$, and using the chain rule, the following holds
\begin{align*}
  \sigma^2 \tr \pa{(\Phi\Phi^*)^+ \frac{\partial \muTY}{\partial y} }
  =
  \sigma^2 \tr \pa{(\Phi\Phi^*)^+ \frac{\partial \Phi z_\theta^\star(u)}{\partial u} \frac{\partial u}{\partial y} }
  =
  \tr \pa{\Pi \frac{\partial z_\theta^\star(u)}{\partial u} }
\end{align*}
whence it follows that
\begin{align*}
  \GSURE_\Pi(\solTY) - \GSURE_\Pi^{\text{(Eldar)}}(\solTY)
  =&
  \normd{x_{\ML}(y)}^2 - \normd{\Pi x_0 }^2 \\
  &- \sigma^2 \tr \!\pa{ (\Phi \Phi^*)^{+} } ~.
\end{align*}

A similar reasoning when $\Phi$ has full rank leads to
\begin{align*}
  \GSURE_\Id(\solTY) - \GSURE_\Id^{\text{(Eldar)}}(\solTY)
  =&
  \normd{x_{\ML}(y)}^2 - \normd{x_0 }^2 \\
  &- \sigma^2 \tr \!\pa{ (\Phi^* \Phi)^{-1} } ~.
\end{align*}

Both our estimators and those of \cite{eldar-gsure} are unbiased, but they do not have necessarily the same variance. Given that they only differ by terms that do not depend on $\solTY$, and in particular on the parameter (here $\theta$), selecting the latter by minimizing our GSURE expressions or those of \cite{eldar-gsure} is expected to lead to the same results. 

Let us finally mention that in the context of deconvolution, $\GSURE_\Pi$ boils down to the unbiased estimator of the projection risk obtained in~\cite{pesquet-deconv}.

\subsection{Numerical computation of the GSURE}
\label{sub:numgsure}

In least-squares regression regularized by a sufficiently smooth penalty term, the DOF can be estimated in closed-form~\cite{solo1996sure}. However even in such simple cases, the computational load and/or storage can be prohibitive for large-scale data.

To overcome the analytical difficult for general non-linear estimators, when no closed-form expression is available, first attempts developed bootstrap-based (asymptotically) unbiased estimators of the DOF~\cite{efron2004estimation}. Ye~\cite{ye1998measuring} and Shen and Ye~\cite{shen2002adaptive} proposed a data perturbation technique to approximate the DOF (and the SURE) when its closed-form expression is not available or numerically expensive to compute. For denoising, a similar Monte-Carlo approach has been used in~\cite{ramani2008montecarlosure} where it was applied to total-variation denoising, wavelet soft-thresholding, and Wiener filtering/smoothing splines.

Alternatively, an estimate can be obtained by recursively differentiating the sequence of iterates that converges to a solution of the original minimization problem. Initially, it has been proposed by~\cite{vonesch2008sure}, and then refined in~\cite{giryes-proj-gsure}, to compute the GSURE of sparse synthesis regularization by differentiating the sequence of iterates of the forward-backward splitting algorithm. 
We have recently proposed a generalization of this methodology to any proximal splitting algorithm, and exemplified it on $\lun$-analysis regularization
including the isotropic total-variation regularization, and $\lun-\ldeux$ synthesis regularization which promotes block sparsity  \cite{deledalle2012proximalncmip}.

In our case, we have shown that the computation of a good estimator of the DOF, and therefore of $\GSURE^{A}$ for various risks, boils down to solving linear systems. This is much more efficient than the previous general-purpose iterative methods that are computationally expensive.



\begin{figure}[t]
  \centering
  \subfigure[$y$]{\includegraphics[width=0.325\linewidth,viewport=128 64 384 197,clip]{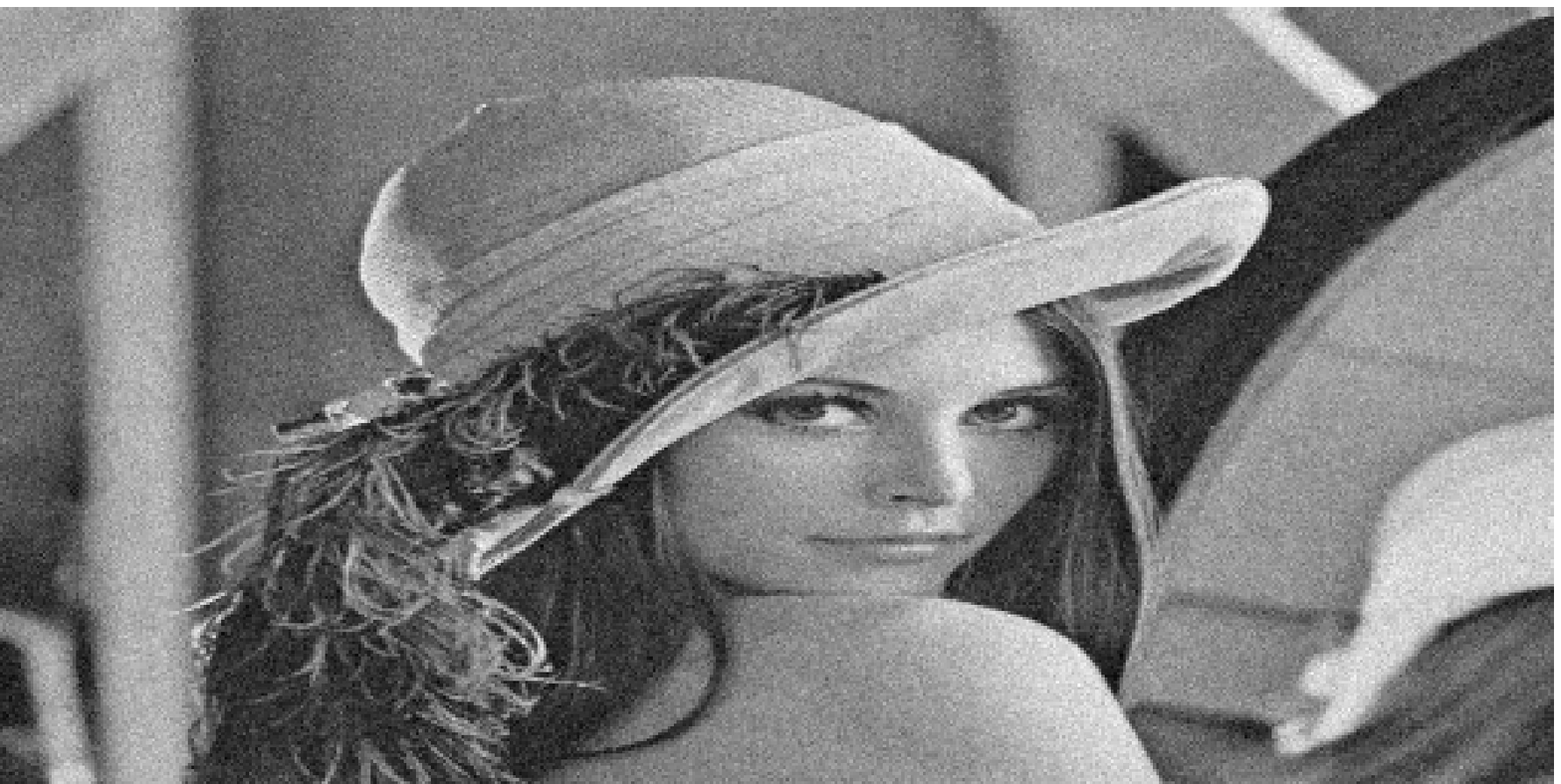}}
  \subfigure[$\solS$ at the optimal $\lambda$]{\includegraphics[width=0.325\linewidth,viewport=128 128 384 384,clip]{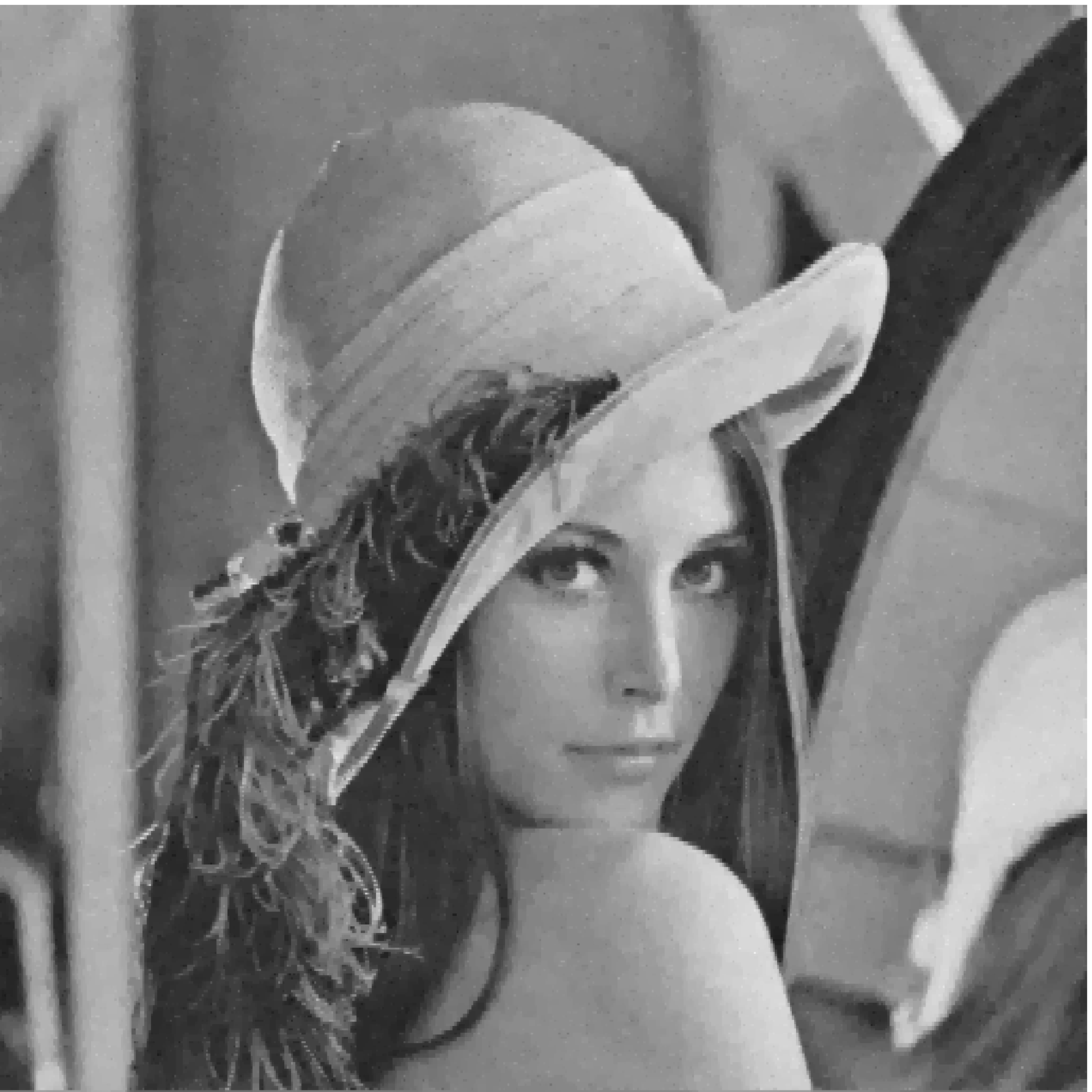}}
  \subfigure[$x_0$]{\includegraphics[width=0.325\linewidth,viewport=128 128 384 384,clip]{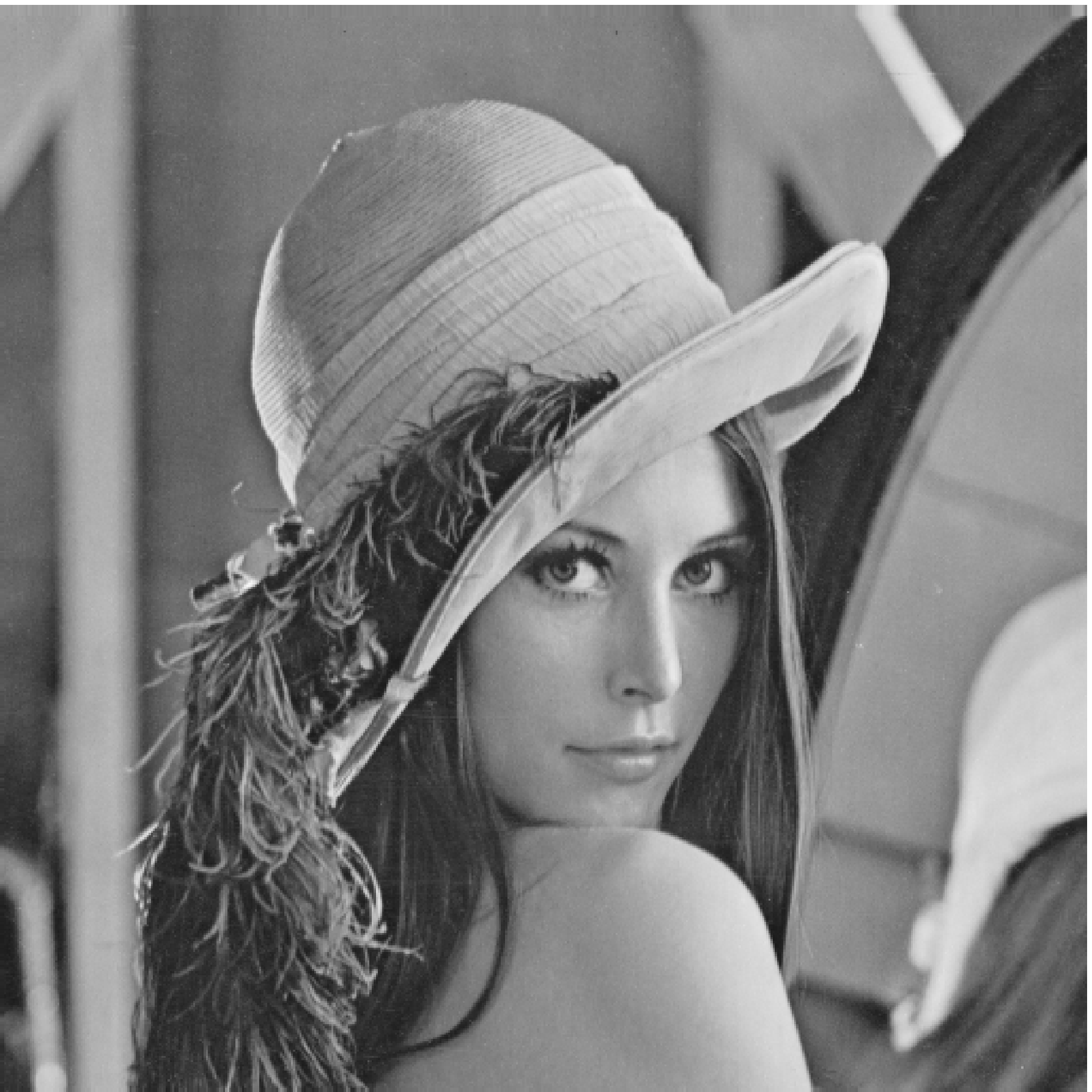}}\\
  \subfigure[]{\includegraphics[width=0.6\linewidth]{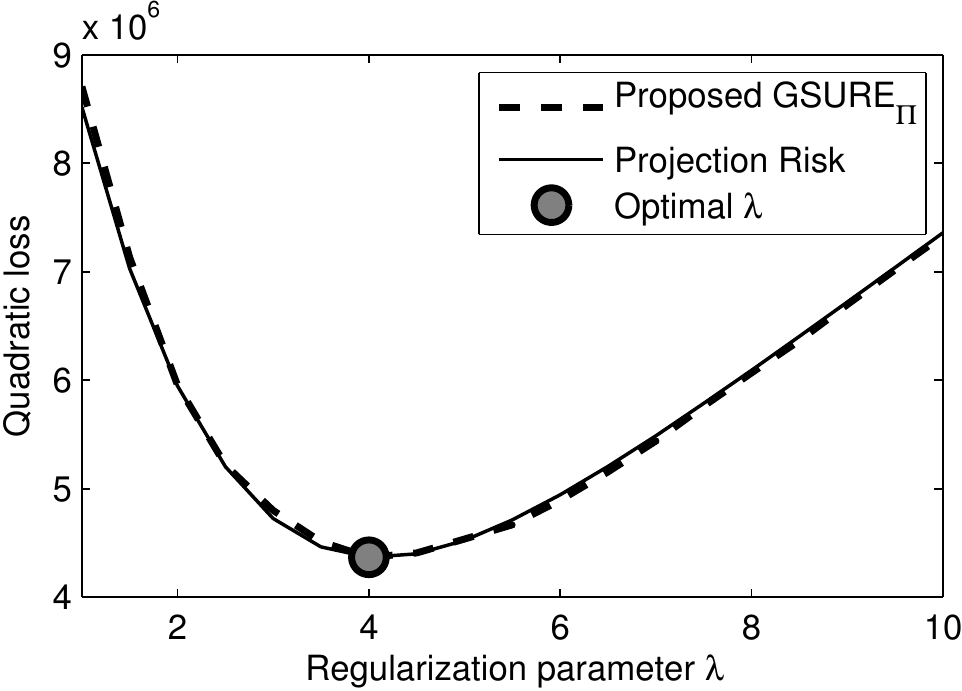}}
  \caption{
    Illustration of the selection of $\lambda$ by minimizing $\GSURE_{\Pi}$ in a
    super-resolution problem ($Q/N = 0.5$) with anisotropic total variation
    regularization.
    (a) The observed image $y$.
    (b) A solution $\solS$ of \lasso at the optimal $\lambda$ (the one minimizing $\GSURE_{\Pi}$).
    (c) The underlying true image $x_0$.
    (d) Projection risk $\risk_{\Pi}$ and its $\GSURE_{\Pi}$ estimate
    obtained from \eqref{eq:dofmean} using $k = 1$ random realization.
  }
  \label{fig:se_tv}
  \vspace{3pt}
\end{figure}

\begin{figure}[t]
  \centering
  \subfigure[$x_{\ML}$]{\includegraphics[width=0.325\linewidth]{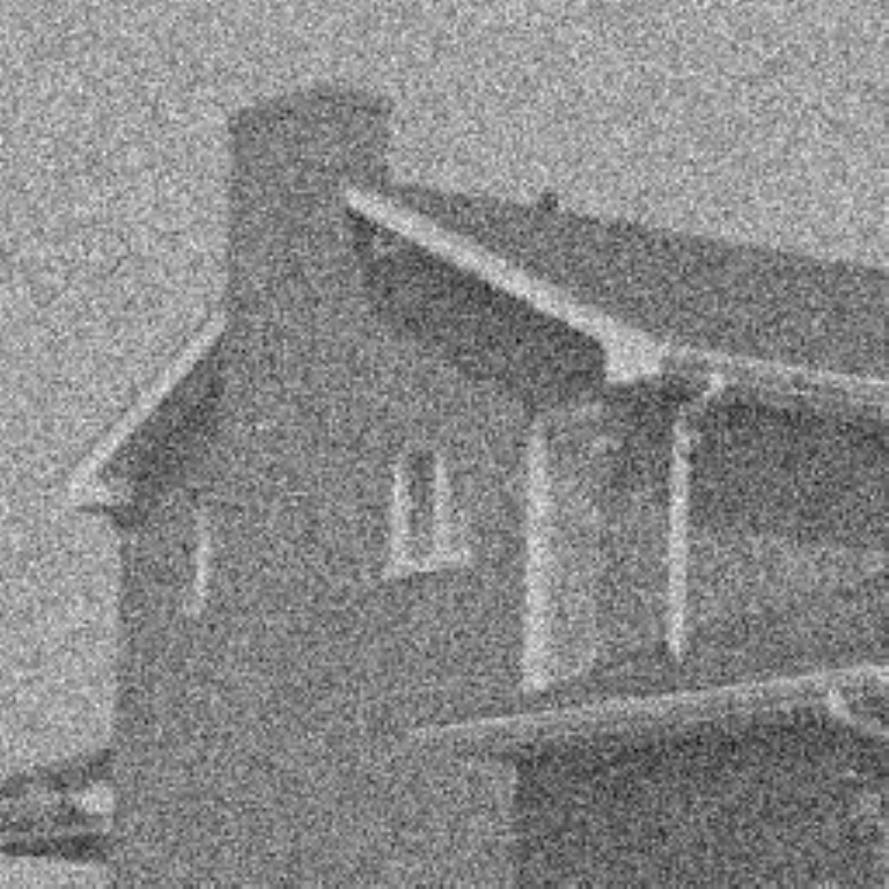}}
  \subfigure[$\solS$ at the optimal $\lambda$]{\includegraphics[width=0.325\linewidth]{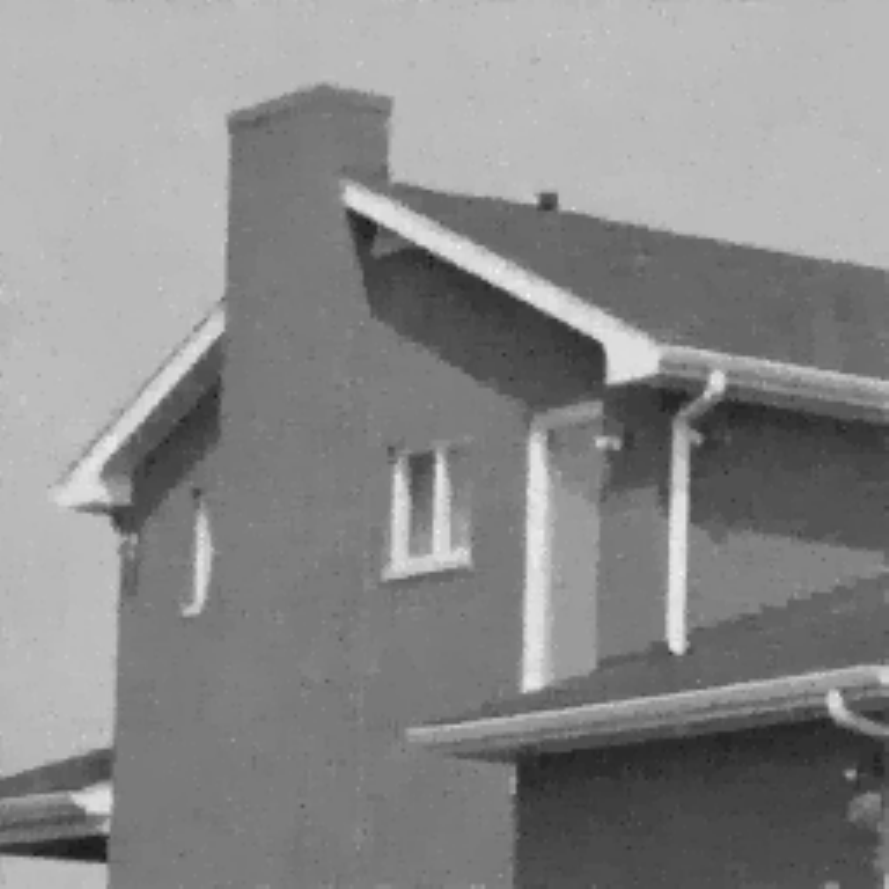}}
  \subfigure[$x_0$]{\includegraphics[width=0.325\linewidth]{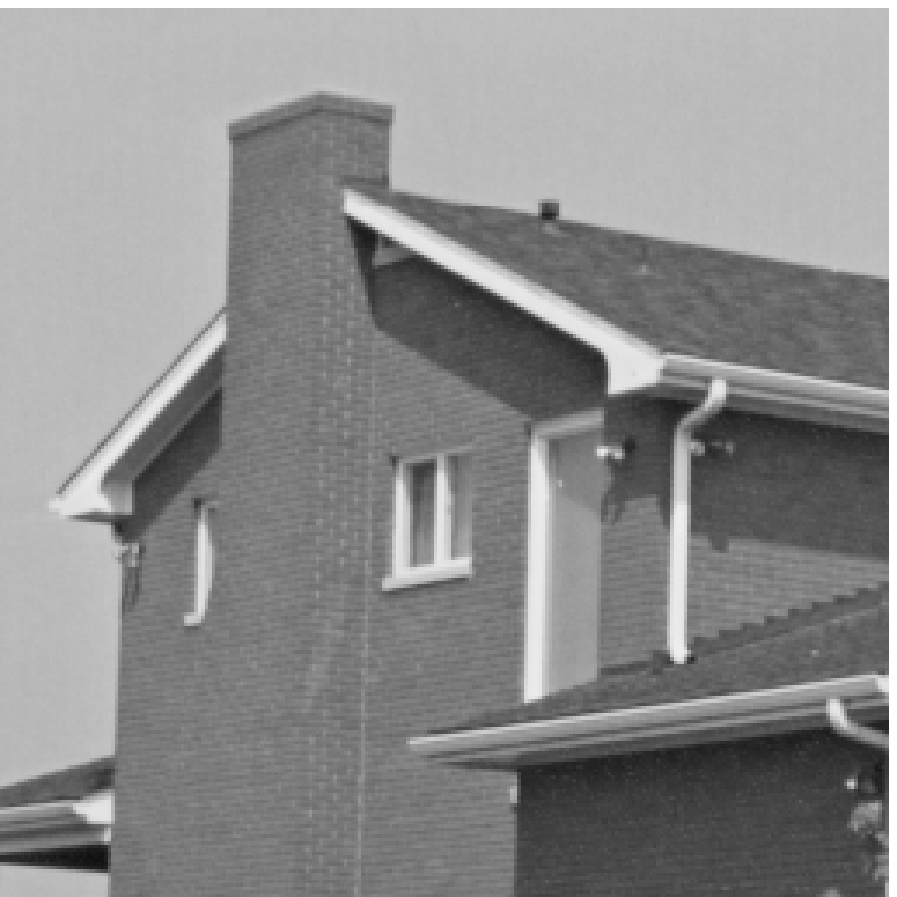}}\\
  \subfigure[]{\includegraphics[width=0.6\linewidth]{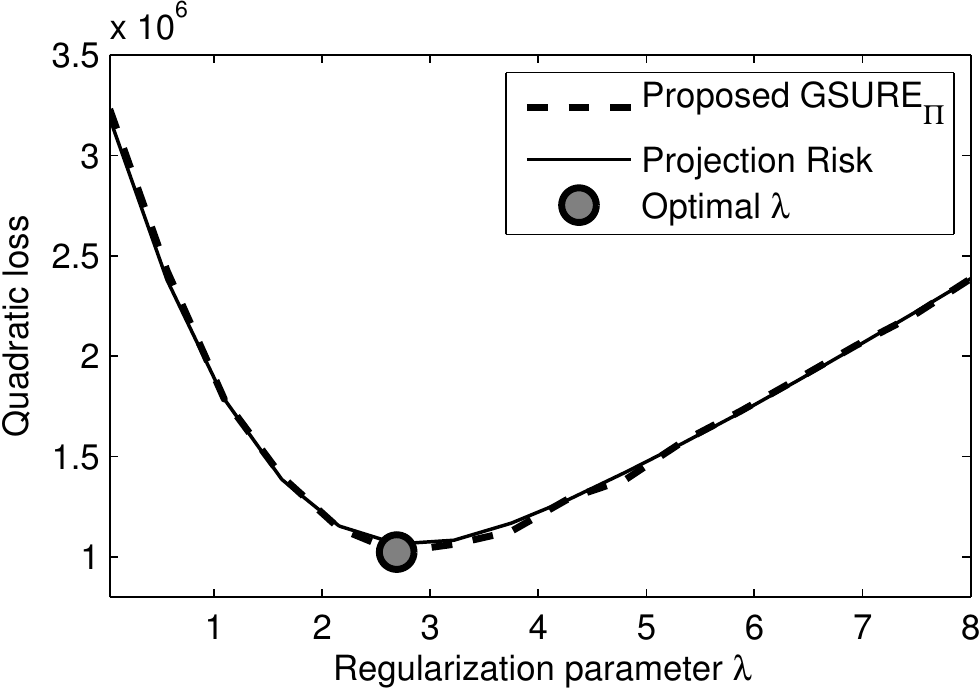}}
  \caption{
    Illustration of the selection of $\lambda$ by minimizing $\GSURE_{\Pi}$ in a
    compressed sensing problem ($Q/N = 0.5$)
    by an $\lun$-analysis regularization in 
    a shift-invariant Haar wavelet dictionary.
    (a) The MLE $x_{\ML}$.
    (b) A solution $\solS$ of \lasso at the optimal $\lambda$ (the one minimizing $\GSURE_{\Pi}$).
    (c) The underlying true image $x_0$.
    (d) Projection risk $\risk_{\Pi}$ and its $\GSURE_{\Pi}$ estimate
    obtained from \eqref{eq:dofmean} using $k = 1$ random realization.
  }
  \label{fig:analysis}
  \vspace{3pt}
\end{figure}

\section{Numerical Experiments} 
\label{sec:examples}

In this section, we exemplify the usefulness of our $\GSURE$ estimator which can serve as a basis for automatically tuning the value of $\la$. This is achieved by computing, from a single realization of the noise $w \sim \mathcal{N}(0,\si^2\Id)$, the parameter $\la$ that minimizes the value of $\GSURE$ when solving \lasso from $y=\Phi x_0 + w$ for various scenarios on $\Phi$ and $x_0$.

\subsection{Computing Minimizers} 
\label{sub:scheme}

\paragraph{Denoising}
Although it is convex, solving problem $\lasso$ is rather challenging given its non-smoothness.
In the case where $\Phi = \Id$, the objective functional of \lasso is strictly convex, and one can compute its unique solution $\solS$ by solving an equivalent equivalent Fenchel-Rockafellar dual problem \cite{chambolle2004algorithm}
\begin{equation*}
  \solS = y + D \alpha_{\lambda}^\star(y)
  \qwhereq 
  \alpha_{\lambda}^\star(y)  \in \uArgmin{\norm{\alpha}_\infty \leq \lambda} \norm{y + D\alpha}_2^2 .
\end{equation*}
This dual problem can be solved using e.g.~projected gradient descent or a multi-step accelerated version.

\paragraph{General Case}
The proximity operator of $x \mapsto \norm{D^* x}_1$ is not computable in closed-form for an arbitrary dictionary $D$.
This precludes the use of popular iterative soft-thresholding
(actually the forward-backward proximal splitting) without sub-iterating.
We therefore appeal to a more elaborate primal-dual splitting algorithm. 
We use in our numerical experiments the relaxed Arrow-Hurwicz algorithm as revitalized recently in \cite{chambolle2011first}.
This algorithm achieves full splitting where all operators are applied separately: the proximity operators of $g \mapsto \tfrac{1}{2}\normd{y - g}^2$ and $u \mapsto \la \norm{u}_1$ (which are known in closed-form), and the linear operators $\Phi$ and $D$ and their adjoints. To cast \lasso in the form required to apply this scheme, we can rewrite it as
\begin{equation*}
  \umin{x \in \RR^N} F(K(x))
  \qwhereq
  \choice{
    F(g,u) = \frac{1}{2}\norm{y-g}_2^2 + \lambda \normu{u} \\
    K(x) = ( \Phi x, D^* x ).
  }
\end{equation*}
Note that other algorithms could be equally applied to solve \lasso, e.g. \cite{raguet2011gfb,pustelnik2011prox,CombettesPesquet11}.

\subsection{Parameter Selection using the GSURE} 

\paragraph*{Super-Resolution with Total Variation Regularization}
In this example, $\Phi$ is a vertical sub-sampling operator of factor two (hence $Q/N = 0.5$).
The noise level has been set such that the observed image $y$ has a peak signal-to-noise ratio (PSNR) of 27.78 dB.
We used an anisotropic total variation regularization; i.e.~the sum of the $\lun$-norms of the partial derivatives in the first and second direction (not to be confused with the isotropic total variation).
Fig.~\ref{fig:se_tv}.d depicts the projection risk and its $\GSURE_\Pi$ estimate obtained from \eqref{eq:dofmean} with $k = 1$ as a function of $\la$.
The curves appear unimodal and coincide even with $k=1$ and a single noise realization.
Consequently, $\GSURE_\Pi$ provides a high-quality selection of $\la$ minimizing the projection risk.
Close-up views of the central parts of the degraded, restored (using the optimal $\la$), and true
images are shown in Fig.~\ref{fig:se_tv}(a)-(c) for visual inspection of the restoration quality.

\paragraph*{Compressed Sensing with Wavelet Analysis Regularization}
We consider in this example a compressed sensing scenario where $\Phi$ is a random partial DCT measurement matrix with an under-sampling ratio $Q/N = 0.5$.
The noise is such that input image $y$ has a PSNR set to 27.50 dB.
We took $D$ as the shift-invariant Haar wavelet dictionary with 3 scales.
Again, we estimate $\GSURE_\Pi$ with $k=1$ in \eqref{eq:dofmean}. The results observed on the super-resolution example are confirmed in this compressed
sensing experiment both visually and qualitatively, see Fig.~\ref{fig:analysis}.\\


\section{Conclusion} 
\label{sec:conclusion}

In this paper, we studied the local behavior of solutions to $\lun$-analysis regularized inverse problems of the form \lasso. We proved that any minimizer $\solS$ of \lasso is a piecewise affine function of the observations $y$ and the regularization parameter $\lambda$. This local affine parameterization is completely characterized by the \dsup $I$ of $\solS$, i.e.~the set of indices of atoms in $D$ with non-zero correlations with $\solS$. As a byproduct, for $y$ outside a set of zero Lebesgue measure, the first-order variations of $\Phi\solS$ with respect to $y$ are obtained in closed-form.

We capitalized on these results to derive a closed-form expression of an unbiased estimator of the degrees of freedom of \lasso, and to objectively and automatically choose the regularization parameter $\lambda$ when the noise contaminating the observations is additive-white Gaussian.
Toward this goal, a unified framework to unbiasedly estimate several risk measures is proposed through the GSURE methodology. This encompasses several special cases such as the prediction, the projection and the estimation risk. A computationally efficient algorithm is designed to compute the GSURE in the context of $\lun$-analysis reconstruction. Illustrations on different imaging inverse problems exemplify the potential applicability of our theoretical findings.


\appendix
\section{Proofs} 
\label{sec:proofs}


Throughout, we use the shorthand notation $\obj$ for the objective function in \lasso
\begin{equation*}
  \obj(x) = \minform .
\end{equation*}
We remind the reader that condition \eqref{eq:H0} is supposed to hold true in all our statements.

\subsection{Preparatory lemmata}
The following key lemma will be central in our proofs. It gives the first order necessary and sufficient optimality conditions for the analysis variational problem \lasso.
\begin{lem}\label{lem:first-order}
  A vector $\solS$ is a solution of \lasso if, and only if, there exists $\sigma \in \RR^{\abs{J}}$, where $J$ is the \dcosup of $\solS$, such that
  \begin{equation}\label{eq:first-order}
    \sigma \in \Sig(\solS)
  \end{equation}
  with
  \begin{multline}\label{eq:first-order-explicit}
    \Sig(\solS) = \Big\{ \sigma \in \RR^{\abs{J}} \setminus  \Phi^*(\Phi \solS - y) + \lambda D_I s_I + \lambda D_J \sigma = 0 \\
                   \qandq \normi{\sigma} \leq 1 \Big\} ,
  \end{multline}
  where $I = J^c$ is the \dsup of  $\solS$ and $s = \sign(D^* \solS)$.
\end{lem}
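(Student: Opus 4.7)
The statement is the familiar Fermat/subdifferential characterization of minimizers of a sum of a smooth convex term and a nonsmooth convex term, so the plan is a textbook application of convex subdifferential calculus.

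First, I would write the objective as $\obj(x) = F(x) + G(x)$ with $F(x) = \tfrac12 \|y - \Phi x\|^2$ and $G(x) = \lambda \|D^* x\|_1$. Since $\obj$ is proper, convex and continuous on $\RR^N$, Fermat's rule states that $\solS$ is a (global) minimizer if and only if $0 \in \partial \obj(\solS)$. Because $F$ is differentiable everywhere and $G$ is finite-valued (so $\dom G = \RR^N$ and the Moreau–Rockafellar qualification condition holds trivially), the subdifferential splits as $\partial \obj(\solS) = \nabla F(\solS) + \partial G(\solS)$, with $\nabla F(\solS) = \Phi^*(\Phi \solS - y)$.

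Next I would compute $\partial G(\solS)$ via the chain rule for the composition of a convex function with a linear map. Again the qualification condition is automatic because $\|\cdot\|_1$ is finite everywhere, so $\partial G(\solS) = \lambda\, D \,\partial \|\cdot\|_1(D^* \solS)$. The subdifferential of $\|\cdot\|_1$ at a vector $u \in \RR^P$ is the product set
\begin{equation*}
  \partial \|\cdot\|_1(u) = \bigl\{ \eta \in \RR^P : \eta_i = \sign(u_i) \text{ if } u_i \neq 0, \ \eta_i \in [-1,1] \text{ if } u_i = 0 \bigr\}.
\end{equation*}
Applied at $u = D^*\solS$, and splitting according to the \dsup $I$ and \dcosup $J$, this means $\eta_I = s_I$ and $\eta_J \in [-1,1]^{|J|}$.

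Combining these two pieces, Fermat's rule yields that $\solS$ is a minimizer iff there exists $\eta \in \partial \|\cdot\|_1(D^*\solS)$ with
\begin{equation*}
  \Phi^*(\Phi \solS - y) + \lambda D \eta = 0.
\end{equation*}
Writing $D\eta = D_I \eta_I + D_J \eta_J = D_I s_I + D_J \sigma$ with $\sigma := \eta_J$, and noting that the condition $\eta \in \partial \|\cdot\|_1(D^*\solS)$ is exactly $\|\sigma\|_\infty \leq 1$ (since $\eta_I$ is already forced to be $s_I$), I recover the desired equivalence with $\sigma \in \Sig(\solS)$.

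There is essentially no hard step: the only subtlety is invoking the chain rule and the sum rule with their qualification conditions, but both hold automatically thanks to the full-domain assumption on $G$ and the smoothness of $F$. The computation of $\partial \|\cdot\|_1$ is standard and coordinate-wise. Thus the argument is a clean three-line application of convex calculus, with the identification $\sigma = \eta_J$ being the only notational bookkeeping.
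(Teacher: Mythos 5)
Your proposal is correct and follows essentially the same route as the paper: Fermat's rule $0\in\partial\obj(\solS)$ combined with standard subdifferential calculus for $x\mapsto\lambda\normu{D^*x}$, then splitting the dual vector into its $I$ and $J$ components and setting $\sigma=\eta_J$. The paper simply states the resulting subdifferential of $\obj$ directly, whereas you make the sum rule, chain rule and their (automatically satisfied) qualification conditions explicit; the content is identical.
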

\begin{proof}
   The subdifferential of a real-valued proper convex function $F : \RR^N \to \RR \cup \{\infty\}$ is denoted $\partial F$. From standard convex analysis, we recall of $\partial F$ at a point $x$ in the domain of $F$ 
  \begin{equation*}
    \partial F(x) = \enscond{ g \in \RR^N \!}{\!\forall z \in \RR^N\!, F(z) \! \geq \! F(x) \!+\! \dotp{g}{z - x}} .
  \end{equation*}
  It is clear from this definition that $\solS$ is a (global) minimizer of $F$ if, and only if, $0 \in \subd F(x)$.
  By classical subdifferential calculus, the subdifferential of $\obj$ at $x$ is the non-empty convex compact set
  \begin{equation*}
    \subd \obj(x) = \enscond{\Phi^* (\Phi x - y) + \lambda D u}{u \in \RR^N: u_I \!=\! \sign(D^* x)_I \text{ and } \normi{u_J} \leq 1} .
  \end{equation*}
  Therefore $0 \in \subd \obj(\solS)$ is equivalent to the existence of $u \in \RR^N$ such that $u_I = \sign(D^* \solS)_I$ and $\normi{u_J} \leq 1$ satisfying
  \begin{equation*}
    \Phi^* (\Phi \solS - y) + \lambda D u = 0 .
  \end{equation*}
  Taking $\sigma=u_J$, this is equivalent to $\sigma \in \Sig(\solS)$.
\end{proof}

\medskip

The following lemma gives an implicit equation satisfied by any (non necessarily unique) minimizer $\solS$ of \lasso.
\begin{lem}\label{lem:sol}
 Let $\solS$ be a solution of $\lasso$.
 Let $I$ be the \dsup and $J$ the \dcosup of $\solS$ and $s = \sign(D^* \solS)$.
 We suppose that \eqref{eq:hj} holds.
 Then, $\solS$ satisfies
 \begin{equation}\label{eq:local-expression}
   \solS = \AJ \Phi^* y - \lambda \AJ D_I s_I .
 \end{equation}
\end{lem}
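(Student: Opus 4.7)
The plan is to combine the first-order optimality condition from Lemma~\ref{lem:first-order} with the constraint that $\solS$ lives in the cospace $\GJ$, and then exploit the defining factorization of $\AJ$ through a basis $\UJ$ of $\GJ$.

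First, I would invoke Lemma~\ref{lem:first-order}: since $\solS$ is a minimizer, there exists $\sigma \in \Sig(\solS)$ such that
\begin{equation*}
  \Phi^* \Phi \solS - \Phi^* y + \lambda D_I s_I + \lambda D_J \sigma = 0.
\end{equation*}
By definition of the \dcosup, $D_J^* \solS = 0$, so $\solS \in \GJ$ and we may write $\solS = \UJ \alpha$ for a unique $\alpha$ (uniqueness following from the fact that the columns of $\UJ$ form a basis of $\GJ$). Substituting this and multiplying the optimality relation on the left by $\UJ^*$ yields
\begin{equation*}
  \UJ^* \Phi^* \Phi \UJ \alpha = \UJ^* \Phi^* y - \lambda \UJ^* D_I s_I - \lambda \UJ^* D_J \sigma.
\end{equation*}

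The key observation is that the columns of $\UJ$ lie in $\Ker D_J^*$, so $D_J^* \UJ = 0$, equivalently $\UJ^* D_J = 0$, and the $\sigma$-dependent term vanishes. Under hypothesis \eqref{eq:hj}, $\Ker \Phi \cap \GJ = \{0\}$, which means $\Phi \UJ$ has trivial kernel, hence the Gram matrix $\UJ^* \Phi^* \Phi \UJ$ is symmetric positive definite and invertible. Inverting it gives
\begin{equation*}
  \alpha = (\UJ^* \Phi^* \Phi \UJ)^{-1}\bigl(\UJ^* \Phi^* y - \lambda \UJ^* D_I s_I\bigr),
\end{equation*}
and multiplying back on the left by $\UJ$, together with the definition \eqref{eq:aj} of $\AJ$, produces exactly the claimed identity $\solS = \AJ \Phi^* y - \lambda \AJ D_I s_I$.

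There is no real obstacle here once the factorization $\solS = \UJ \alpha$ is exploited; the only subtle point to flag is that $\sigma$ disappears upon projection onto $\GJ$, which is what makes the expression independent of the particular dual certificate chosen in Lemma~\ref{lem:first-order} and ensures $\solS$ is given solely by $y$, $\lambda$, $J$, and $s_I$. One could also note that the resulting formula is independent of the choice of basis $\UJ$, since $\AJ$ coincides with the intrinsic operator $u \mapsto \argmin\{\tfrac12\|\Phi x\|^2 - \langle x, u\rangle : D_J^* x = 0\}$ mentioned in the excerpt.
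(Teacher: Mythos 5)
Your proposal is correct and follows essentially the same route as the paper: invoke the first-order optimality condition of Lemma~\ref{lem:first-order}, write $\solS = \UJ \alpha$ using $D_J^*\solS = 0$, left-multiply by $\UJ^*$ so that the $D_J\sigma$ term vanishes, and invert the Gram matrix $\UJ^*\Phi^*\Phi\UJ$, whose invertibility under \eqref{eq:hj} you in fact justify more explicitly than the paper does. No gaps.
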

\begin{proof}
  Owing to the first order necessary and sufficient optimality condition (Lemma \ref{lem:first-order}), there exists $\sigma \in \Sig(\solS)$ satisfying
  \begin{equation}\label{eq:first-order-x}
    \Phi^* (\Phi \solS - y) + \lambda D_I s_I + \lambda D_J \sigma = 0 .
  \end{equation}
  By definition, $\solS \in \GJ=(\Im D_J)^\bot$.
  We can then write $\solS = \UJ \alpha$ for some $\alpha \in \RR^{\dim(\GJ)}$.
  Since $\UJ^* D_J = 0$, multiplying both sides of \eqref{eq:first-order-x} on the left by $\UJ^*$, we get
  \begin{equation*}
    \UJ^* \Phi^* (\Phi \UJ \alpha - y) + \lambda \UJ^* D_I s_I = 0 .
  \end{equation*}
  Since $\UJ^* \Phi^* \Phi \UJ$ is invertible, the implicit equation of $\solS$ follows immediately.
\end{proof}

\medskip

Suppose now that a vector satisfies the above implicit equation. The next lemma derives two equivalent necessary and sufficient conditions to guarantee that this vector is actually a solution to \lasso.
\begin{lem}\label{lem:soldeux}
  Let $y \in \RR^Q$, let $J$ a \dcosup such that \eqref{eq:hj} holds and let $I = J^c$.
  Suppose that $\solSp$ satisfies
  \begin{equation*}
    \solSp = \AJ \Phi^* y  - \lambda \AJ D_I s_I .
  \end{equation*}
  where $s = \sign(D^* \solSp)$.
  Then, $\solSp$ is a solution of \lasso if, and only if, there exists $\sigma \in \RR^{|J|}$ satisfying one of the following equivalent conditions
  \begin{equation}\label{eq:cond-imp}
    \sigma - \Omega^{[J]} s_I + \frac{1}{\lambda} \BJ^{[J]} y \in \Ker D_J \qandq \normi{\sigma} \leq 1 ,
  \end{equation}
  or
  \begin{equation}\label{eq:forme-soldeux}
    \tilde \BJ^{[J]} y - \lambda \tilde \CJ^{[J]} s_I + \lambda D_J \sigma = 0 \qandq \normi{\sigma} \leq 1 ,
  \end{equation}  
  where $\tilde \CJ^{[J]} = (\Phi^* \Phi \AJ - \Id)D_I$, $\tilde \BJ^{[J]} = \Phi^* (\Phi \AJ \Phi^* - \Id)$, $\Omega^{[J]} = D_J^+ \tilde \Omega^{[J]}$ and $\BJ^{[J]} = D_J^+ \tilde \BJ^{[J]}$.
\end{lem}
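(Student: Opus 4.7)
The plan is to reduce the first-order optimality condition of Lemma~\ref{lem:first-order} to the two announced forms by plugging in the closed-form expression of $\solSp$, and then to transfer between the two forms via the pseudoinverse $D_J^+$, after proving a key orthogonality fact.

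First I would invoke Lemma~\ref{lem:first-order}: $\solSp$ is a solution of \lasso iff there exists $\sigma \in \RR^{|J|}$ with $\normi{\sigma} \le 1$ satisfying $\Phi^*(\Phi \solSp - y) + \lambda D_I s_I + \lambda D_J \sigma = 0$. Substituting $\solSp = \AJ \Phi^* y - \lambda \AJ D_I s_I$ and regrouping,
\begin{equation*}
  \Phi^*(\Phi \solSp - y) + \lambda D_I s_I = \Phi^*(\Phi \AJ \Phi^* - \Id) y - \lambda(\Phi^*\Phi \AJ - \Id) D_I s_I = \tilde\BJ^{[J]} y - \lambda \tilde\CJ^{[J]} s_I,
\end{equation*}
which immediately yields \eqref{eq:forme-soldeux}. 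This gives the first equivalent form and simultaneously proves one half of the lemma.

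Next I would establish the key observation needed for the equivalence with \eqref{eq:cond-imp}: both vectors $\tilde\BJ^{[J]} y$ and $\tilde\CJ^{[J]} s_I$ lie in $\Im D_J = \GJ^\perp$. For this, recall from \eqref{eq:aj} that $\Im \AJ \subseteq \GJ$, so that $u := \AJ \Phi^* y$ and $u' := \AJ D_I s_I$ are both in $\GJ$ and can be written as $u = \UJ \gamma$, $u' = \UJ \gamma'$. A direct computation shows $\UJ^* \Phi^* \Phi u = \UJ^* \Phi^* y$ and $\UJ^*\Phi^*\Phi u' = \UJ^* D_I s_I$, whence
\begin{equation*}
  \UJ^*\bigl(\Phi^*\Phi u - \Phi^* y\bigr) = 0 \qandq \UJ^*\bigl(\Phi^*\Phi u' - D_I s_I\bigr) = 0,
\end{equation*}
i.e.~both vectors are orthogonal to $\GJ$, thus in $\Im D_J$.

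Given this, $D_J D_J^+$ acts as the identity on $\tilde\BJ^{[J]} y$ and $\tilde\CJ^{[J]} s_I$, while $D_J^+ D_J = \Id - \Proj_{\Ker D_J}$. Dividing \eqref{eq:forme-soldeux} by $\lambda$ and applying $D_J^+$ yields
\begin{equation*}
  \tfrac{1}{\lambda}\BJ^{[J]} y - \Omega^{[J]} s_I + \sigma - \Proj_{\Ker D_J}\sigma = 0,
\end{equation*}
which rearranges to \eqref{eq:cond-imp}. Conversely, if \eqref{eq:cond-imp} holds, applying $D_J$ annihilates the $\Ker D_J$ component of $\sigma - \Omega^{[J]} s_I + \tfrac{1}{\lambda}\BJ^{[J]} y$, and using $D_J\Omega^{[J]} = D_J D_J^+ \tilde\CJ^{[J]} = \tilde\CJ^{[J]}$ and $D_J \BJ^{[J]} = \tilde\BJ^{[J]}$ (the step where the key observation enters), we recover \eqref{eq:forme-soldeux}. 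The norm constraint $\normi{\sigma}\le 1$ is carried through unchanged in both directions. The main obstacle is precisely the containment $\tilde\BJ^{[J]} y,\tilde\CJ^{[J]} s_I \in \Im D_J$; without it, passing from the pseudoinverse form \eqref{eq:cond-imp} back to \eqref{eq:forme-soldeux} would leave residual $\Im D_J^\perp$ components and the equivalence would fail.
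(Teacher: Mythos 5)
Your proof is correct and follows essentially the same route as the paper: substitute the closed form of $\solSp$ into the first-order condition of Lemma~\ref{lem:first-order} to obtain \eqref{eq:forme-soldeux}, then pass to \eqref{eq:cond-imp} via $D_J^+$ after verifying that $\tilde\BJ^{[J]} y$ and $\tilde\CJ^{[J]} s_I$ lie in $\Im D_J$. The paper establishes that containment at the matrix level by computing $\UJ^* \tilde\BJ^{[J]} = 0$ and $\UJ^* \tilde\CJ^{[J]} = 0$ directly, which is the same calculation you carry out for the specific vectors; the remaining manipulations with $D_J D_J^+$ and $D_J^+ D_J$ coincide with the paper's.
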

\begin{proof}
  First, we observe that $\solSp \in \GJ$.
  According to Lemma \ref{lem:first-order}, $\solSp$ is a solution of \lasso if, and only if, there exists $\sigma \in \Sig(\solSp)$.
  Since \eqref{eq:hj} holds, $\AJ$ is properly defined.
  We can then plug the assumed implicit equation in \eqref{eq:first-order-explicit} to get
  \begin{equation*}
    \Phi^* (\Phi \AJ \Phi^* y  - \lambda \Phi \AJ D_I s_I - y) + \lambda D_I s_I + \lambda D_J \sigma = 0 .
  \end{equation*}
  Rearranging the terms multiplying $y$ and $s_I$, we arrive at
  \begin{equation*}
    \Phi^*(\Phi \AJ \Phi^* - \Id) y - \lambda (\Phi^* \Phi \AJ - \Id)D_I s_I + \lambda D_J \sigma = 0 .
  \end{equation*}
  This shows that $\solS$ is a minimizer of \lasso if, and only if
  \begin{equation*}
    \tilde \BJ^{[J]} y - \lambda \tilde \CJ^{[J]} s_I + \lambda D_J \sigma = 0 \qandq \normi{\sigma} \leq 1 .
  \end{equation*}

  To prove the equivalence with \eqref{eq:forme-soldeux}, we first note that $\UJ^* \tilde \CJ^{[J]}=0$ implying that $\Im(\tilde \CJ^{[J]}) \subseteq \Im(D_J)$. Since $D_JD_J^+$ is the orthogonal projector on $\Im(D_J)$, we get $\tilde \CJ^{[J]}=D_JD_J^+\tilde \CJ^{[J]}=D_J\CJ^{[J]}$.
  With a similar argument, we get $\tilde \BJ^{[J]} = D_J \BJ^{[J]}$.
  Hence, the existence of $\sigma \in \Sig(\solSp)$ such that $\normi{\sigma} \leq 1$ is equivalent to
  \begin{equation*}
    D_J \sigma = D_J \CJ^{[J]} s_I - \frac{1}{\lambda} D_J \BJ^{[J]} y \qwhereq \normi{\sigma} \leq 1 ,
  \end{equation*}
  which in turn is equivalent to
  \begin{equation*}
    \sigma - \CJ^{[J]} s_I + \frac{1}{\lambda} \BJ^{[J]} y \in \Ker D_J \qwhereq \normi{\sigma} \leq 1 .
  \end{equation*}
\end{proof}

We now show that even if \lasso admits several solutions $\solS$, all of them share the same image under $\Phi$, which in turn implies that $y\mapsto\mus$ is a single-valued mapping.
\begin{lem}\label{lem:unique-image}
  If $x_1$ and $x_2$ are two minimizers of \lasso, then $\Phi x_1 = \Phi x_2$.
\end{lem}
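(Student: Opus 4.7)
The plan is to exploit strict convexity of the quadratic data-fidelity term with respect to the prediction variable $\Phi x$, together with the fact that the set of minimizers of a convex function is convex.

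More precisely, let $m = \min_x \obj(x)$, so that $\obj(x_1) = \obj(x_2) = m$. First I would form the midpoint $x_t = t x_1 + (1-t) x_2$ for some fixed $t \in (0,1)$. Since $\obj$ is convex, $\obj(x_t) \leq t\,\obj(x_1) + (1-t)\,\obj(x_2) = m$, and since $m$ is the minimum this inequality is an equality. Splitting $\obj = F_1 + F_2$ with $F_1(x) = \tfrac{1}{2}\|y - \Phi x\|^2$ and $F_2(x) = \lambda\|D^*x\|_1$, both convex, the equality $\obj(x_t) = t\,\obj(x_1) + (1-t)\,\obj(x_2)$ forces equality separately in each of the two convex combinations, in particular
\begin{equation*}
  F_1(x_t) = t\,F_1(x_1) + (1-t)\,F_1(x_2).
\end{equation*}

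Next I would use the fact that $F_1$ factors through $\Phi$ as $F_1(x) = g(\Phi x)$ with $g(u) = \tfrac{1}{2}\|y-u\|^2$ strictly convex on $\RR^Q$. Applied to $u_i = \Phi x_i$, the previous equality reads $g(t u_1 + (1-t) u_2) = t g(u_1) + (1-t) g(u_2)$, and strict convexity of $g$ forces $u_1 = u_2$, i.e.\ $\Phi x_1 = \Phi x_2$.

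There is no real obstacle here; the only point to mention explicitly is that $F_2$ being merely convex (not strictly convex) is harmless, because we only need strict convexity of $F_1$ in the variable $\Phi x$ to pin down $\Phi x_1 = \Phi x_2$. As an immediate byproduct, $\mus := \Phi\solS$ is a well-defined single-valued map of $y$ (and $\lambda$), which is precisely what is required for the sensitivity statements of Section~\ref{sub:contrib-variation-prediction}.
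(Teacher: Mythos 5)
Your proof is correct and follows essentially the same route as the paper's: both rest on the strict convexity of $u \mapsto \tfrac{1}{2}\normd{y-u}^2$ in the prediction variable combined with mere convexity of the $\lun$ term. The only cosmetic difference is that the paper argues by contradiction ($\Phi x_1 \neq \Phi x_2$ would give $\obj(x_3) < \obj(x_1)$), whereas you argue directly through the equality case of the convexity inequalities; the mathematical content is identical.
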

\begin{proof}
  Let $x_1, x_2$ be two minimizers of \ref{eq:lasso-a}. Suppose that $\Phi x_1 \neq \Phi x_2$.
  Take $x_3 = \rho x_1 + (1-\rho)x_2$, $\rho \in (0,1)$.
  Strict convexity of $u \mapsto \normd{y - u}^2$ implies that
  \begin{equation*}
    \frac{1}{2} \normd{y - \Phi x_3}^2 < \frac{\rho}{2} \normd{y - \Phi x_1}^2 +  \frac{1-\rho}{2} \normd{y - \Phi x_2}^2  .
  \end{equation*}
  Jensen's inequality again applied to the $\lun$ norm gives
  \begin{equation*}
    \normu{D^* x_3} \leq \rho \normu{D^* x_1} + (1-\rho)\normu{D^* x_2} .
  \end{equation*}
  Together, these two inequalities yield $\obj(x_3) < \obj(x_1)$, which contradicts our initial assumption that $x_1$ is a minimizer of \lasso.
\end{proof}

\subsection{Proof of Theorem \ref{thm:local}} 
\label{sub:local}

\begin{proof}
  Let $(y,\lambda) \not\in \Hh$.
  By construction, the vector $\solSpLY$ obeys $D_J^* \solSpLY=0$.
  Accordingly, for $(\bar y, \bar \lambda)$ sufficiently close to $(y,\lambda)$, one has
  \begin{equation*}
    \sign(D^* \solSpLY) = \sign(D^* \solS).
  \end{equation*}
  Since $\xsoly$ is a solution of \lasso, using Lemmas \ref{lem:sol} and \ref{lem:soldeux}, there exists $\sigma$ such that
  \begin{equation}\label{eq:implicit}
    \tilde \BJ^{[J]} y - \lambda \tilde \CJ^{[J]} s_I + \lambda D_J \sigma = 0 \qandq \normi{\sigma} \leq 1 .
  \end{equation}
  Let us split $J = K \cup L$, $K \cap L = \emptyset$ such that $\normi{\sigma_{K}} = 1$ and $\normi{\sigma_{L}} < 1$. Note that $\sigma_K \in \{-1,1\}^{\abs{K}}$. \\

  We first suppose that $\Im \tilde \BJ^{[J]} \subseteq \Im D_{L}$.
  To prove that $\solSpLY$ is solution to $(\lassoP{\bar y}{\bar \lambda})$, we show that there exists $\bar \sigma$ such that $\normi{\bar{\sigma}} \leq 1$ and
  \begin{equation*}
    \tilde \BJ^{[J]} \bar y - \bar \lambda \tilde \CJ^{[J]} s_I + \bar \lambda D_K \bar{\sigma}_K + \bar \lambda D_{L} \bar{\sigma}_{L} = 0.
  \end{equation*}
  We impose that $\bar{\sigma}_K = \sigma_K$ and take $\bar{\sigma}_{L}$ as
  \begin{equation*}
    \bar{\sigma}_{L} = \sigma_{L} - \frac{1}{\lambda} D_{L}^+ \tilde \Pi^{[J]} \pa{ \frac{\lambda - \bar \lambda}{\bar \lambda} y + \frac{\lambda}{\bar \lambda} (\bar y - y) } .
  \end{equation*}
  Hence,
  \begin{eqnarray*}
    & &   \tilde \BJ^{[J]} \bar y 
        - \bar \lambda \tilde \CJ^{[J]} s_I 
        + \bar \lambda D_J \bar \sigma \\
    & = & \tilde \BJ^{[J]} \bar y
        - \bar \lambda \tilde \CJ^{[J]} s_I 
        + \bar \lambda D_K \sigma_K
        + \bar \lambda D_{L} \sigma_{L} \\
    &   & - D_{L} D_{L}^+ \frac{\bar \lambda}{\lambda} \tilde \Pi^{[J]} \pa{ \frac{\lambda - \bar \lambda}{\bar \lambda} y + \frac{\lambda}{\bar \lambda} (\bar y - y) } \\
    & = & \underbrace{\tilde \BJ^{[J]} y
        - \lambda \tilde \CJ^{[J]} s_I 
        + \lambda D_K \sigma_K
        + \lambda D_{L} \sigma_{L}}_{=0} \\
    &  & - \tilde \Pi^{[J]} (y - \bar y) 
         + (\lambda - \bar \lambda) \tilde \CJ^{[J]} s_I 
         - (\lambda - \bar \lambda) D_K \sigma_K
         - (\lambda - \bar \lambda) D_{L} \sigma_{L} \\
    &  & - D_{L} D_{L}^+ \frac{\bar \lambda}{\lambda} \tilde \Pi^{[J]} \pa{ \frac{\lambda     - \bar \lambda}{\bar \lambda} y + \frac{\lambda}{\bar \lambda} (\bar y - y) } ~.
  \end{eqnarray*}
  Since $\Im \tilde \BJ^{[J]} \subseteq \Im D_{L}$ and $D_L D_L^+$ is the orthogonal projector on $\Im(D_L)$, we have $\tilde \BJ^{[J]}=D_L D_L^+\tilde \BJ^{[J]}$.
  It follows that,
  \begin{eqnarray*}
    & &   \tilde \BJ^{[J]} \bar y 
        - \bar \lambda \tilde \CJ^{[J]} s_I 
        + \bar \lambda D_J \bar \sigma \\
    & = & \frac{\bar \lambda - \lambda}{\lambda} \left[ \tilde \BJ^{[J]} y - \lambda \tilde \CJ^{[J]} s_I + \lambda D_K \sigma_K + \lambda D_{L} \sigma_{L} \right] \\
    & = & 0 .
  \end{eqnarray*}
  Now, for $(\bar y,\bar \lambda)$ close enough to $(y,\lambda)$, we have
  \begin{equation*}
    \normi{\bar \sigma_{L}} = \normi{\sigma_{L} + \frac{1}{\lambda} D_{L}^+ \tilde \Pi^{[J]} \pa{ \frac{\bar \lambda - \lambda}{\bar \lambda} y + \frac{\lambda}{\bar \lambda} (y - \bar y) } }\leq 1 ,
  \end{equation*}
  whence we deduce that $\solSpLY$ is a solution of $(\lassoP{\bar y}{\bar \lambda})$.

  In fact, for $(y,\lambda) \not\in \Hh$, we inevitably have $\Im \tilde \BJ^{[J]} \subseteq \Im D_{L}$.
  Indeed, projecting \eqref{eq:implicit} on $\Gg_{L}$ gives
  \begin{equation*}
    0 = \Proj_{\Gg_{L}} \left( \tilde \BJ^{[J]} y - \lambda \tilde \CJ^{[J]} s_I + \lambda D_J \sigma  \right) = \Proj_{\Gg_{L}} \left( \tilde \BJ^{[J]} y - \lambda \tilde \CJ^{[J]} s_I + \lambda D_K \sigma_K  \right) .
  \end{equation*}
  or equivalently
  \begin{equation*}
    \Proj_{\Gg_{L}}\tilde \BJ^{[J]} y = \Proj_{\Gg_{L}}\lambda(\tilde \CJ^{[J]} s_{I} - D_K \sigma_K) .
  \end{equation*}
  If $\Im \tilde \BJ^{[J]} \not\subseteq \Im D_{L}$, then $(y,\lambda) \in \Hh_{J,K,s_{I},\sigma_K}$, a contradiction. This concludes the proof.
\end{proof}

\subsection{Proof of Theorem \ref{thm:dof}} 
\label{sub:dof}

\begin{proof}[Proof of (i)]
  First it is easy to see that $\Hh_{J,K,s_{J^c},\sigma_K}$ in Definition~\ref{defn:h} is a vector subspace of $\RR^Q \times \RR$.
  Moreover $\Hh_{J,K,s_{J^c},\sigma_K} \subseteq \ker \Proj_{\Gg_{J \setminus K}}B$, where $B=[\tilde \BJ^{[J]} ~~ -\tilde \CJ^{[J]}s_{J^c} + D_K \sigma_K]$.

  Now, fix $\lambda$. $\Hh_{\cdot,\lambda}$ is included in
  \begin{equation*}
    \tilde \Hh^\lambda = \bigcup_{\substack{J \subset \ens{1,\cdots,P} \\ \text{\eqref{eq:hj} holds} }}
          \bigcup_{\substack{K \subset J \\ \Im \PIJ \not\subset \Im D_{J \setminus K}}}
          \bigcup_{s_{J^c} \in \ens{-1,1}^{\abs{J^c}}}
          \bigcup_{\sigma_K \in \ens{-1,1}^{\abs{K}}}
          \tilde \Hh_{J,K,s_{J^c},\sigma_K},
  \end{equation*}
  where
  \begin{equation*}
    \tilde \Hh_{J,K,s_{J^c},\sigma_K}^\lambda = \enscond{y \in \RR^Q}{\Proj_{\Gg_{J \setminus K}}\PIJ y = \Proj_{\Gg_{J \setminus K}}\lambda(\CJJ s_{J^c} + D_K \sigma_K)} ,
  \end{equation*}
  Since $\Im \tilde \BJ^{[J]}  \not\subseteq \Im(D_{J \setminus K})$, $\Hh_{J,K,s_{J^c},\sigma_K}^\lambda$ is an affine subspace of $\RR^Q$ with $\dim(\tilde \Hh_{J,K,s_{J^c},\sigma_K}^\lambda) = \dim(\ker\Proj_{\Gg_{J \setminus K}}\PIJ) < Q$, where the inequality follows from the rank-nullity theorem and the fact that $\Gg_{J \setminus K}$ is a (nonempty) strict subspace of $\RR^Q$. Given that $\tilde \Hh^\lambda$ is a finite union of subspaces $\tilde \Hh_{J,K,s_{J^c},\sigma_K}^\lambda$ all strictly included in $\RR^Q$, $\tilde \Hh^\lambda$ has a Lebesgue measure zero and so does $\Hh_{\cdot,\lambda} \subseteq \tilde \Hh^\lambda$.
  
  Note that with a similar reasoning, one can show that $\Hh$ is also of zero Lebesgue measure using the fact that $B \not\subseteq \Im D_{J \setminus K}$ if $\tilde \BJ^{[J]} \not\subseteq \Im D_{J \setminus K}$ since  $\Im \tilde \BJ^{[J]} \subseteq \Im B$.
\end{proof}

\begin{proof}[Proof of (ii)]
  The proof of this statement is constructive.
  Denote by $\mathcal{M}_\lambda(y)$ the set of minimizers of \lasso. To lighten the notation, we drop the dependence on $y$ and $\lambda$ from $\solS \in \mathcal{M}_\lambda(y)$.

  \paragraph{\textbf{First step}} We prove the following statement
  \begin{align*}
    &\Big( \left( x^\star \in \mathcal{M}_\lambda(y) \wedge \neg (H_{\supp(D^* x^\star)^c}) \right)\\
    &\hspace{4em} \Longrightarrow
    \exists x_\lambda^{\star\star}(y) \in \mathcal{M}_\lambda(y) \,\wedge\, \supp(D^* x^{\star\star}) \subsetneq \supp(D^* x^\star) \Big),
  \end{align*}
  wher $\wedge$ and $\neg$ are respectively the logical conjunction and negation symbols. 
  In plain words, let $x^\star$ be a solution of \lasso.
  Suppose \eqref{eq:hj} does not hold where $J$ is the \dcosup of $x^\star$.
  We prove that there exists a solution $x_\lambda^{\star\star}(y)$ of \dsup strictly included in $I = J^c$.

  Since \eqref{eq:hj} does not hold, there exists $z \in \Ker \Phi$ with $z \neq 0$ and $D_J^* z = 0$.
  We define for every $t \in \RR$, the vector $v_t = x^\star + t z$.
  Denote $\Bb$ the subset of $\mathbb{R}$ defined by
  \begin{equation*}
    \Bb = \enscond{t \in \RR}{\sign(D^* v_t) = \sign(D^* x^\star) } ,
  \end{equation*}
  $\Bb$ is a non-empty convex set and $0\in\Bb$.
  Moreover for all
  $t\in\Bb$, $\partial \mathcal{L}_{y,\lambda}(v_t)=\partial
  \mathcal{L}_{y,\lambda}(x^\star)$. It then follows from Lemma~\ref{lem:first-order} that for all $t\in\Bb$,
  $v_t$ is a solution of \lasso.
  As a consequence, using Lemma~\ref{lem:unique-image}, we get
  \begin{equation*}
    \forall t \in \Bb, \quad \Phi v_t=\Phi x^\star \qandq \normu{D^* v_t}=\normu{D^* x^\star} .
  \end{equation*}
  Since
  $\underset{|t|\to\infty}{\lim}\normu{D^* v_t}=+\infty$, the set
  $\Bb$ is bounded.
  It is also an open set as a finite intersection of $P$ open sets corresponding to the solutions to $\sign((D^* x^\star)_i + t z_i) = \sign((D^* x^\star)_i))$.
  Hence, $\Bb$ is an open interval of $\RR$ which contains 0, i.e. there exist $t_1,t_0 \in \RR$ such that
  \begin{equation*}
    \Bb = ]t_1,t_0[ \qwhereq -\infty < t_1 < 0 \qandq 0 <t_0 < + \infty .
  \end{equation*}

  Since $t_0 \not\in \Bb$, the \dsup of $v_{t_0}$ is strictly included in $I$.
  Moreover by continuity,
  \begin{equation*}
    \Phi v_{t_0}=\Phi x^\star \qandq \normu{D^* v_{t_0}} = \normu{D^* x^\star} .
  \end{equation*}
  Hence, $v_{t_0}$ is a solution of \lasso of \dsup strictly included in $I$.

  \paragraph{\textbf{Second step}} We now prove our claim, i.e.
  \begin{equation*}
    \exists x^\star \in \mathcal{M}_\lambda(y) ~ \text{such that} ~ (H_{\supp(D^* x^\star)^c}) ~ \text{holds}.
  \end{equation*}
  Consider $(x_{(1)}^\star, \dots, x_{(P+1)}^\star) \in (\mathcal{M}_\lambda(y))^{P+1}$ such that for every $i \in \ens{1,\dots,P+1}$, the condition $(H_{J_i})$ does not hold for $J_i = \supp(D^* x_{(i)}^\star)^c$ and $J_1 \supsetneq J_2 \supsetneq \dots \supsetneq J_{P+1}$.
  Then, we have a strictly increasing sequence of $P+1$ subsets of $\ens{1,\dots,P}$ which is impossible.
  Hence, according to the first step of our proof, there exists $i \in \ens{1,\dots,P+1}$ such that $(H_{J_i})$ holds.
\end{proof}

\begin{proof}[Proof of (iii)]
  By virtue of statement (ii), there exists a solution $\solS$ of \lasso such that \eqref{eq:hj} holds.
  Let us consider this solution.
  Using Theorem~\ref{thm:local} for $\bar y$ close enough to $y$, we have
  \begin{equation*}
    \Phi \solSpY = \Phi \AJ \Phi^* \bar y - \lambda \Phi \AJ D_I s_I .
  \end{equation*}
  where $J$ is the \dcosup of $\solS$.
  Since $I$ (hence $J$) and $s_I$ are locally constant under the assumptions of the theorem, so is the vector $\lambda \Phi \AJ D_I s_I$, it follows that $\mu^\star_{\lambda}(\bar y) = \Phi \solSpY$ can be written as 
  \begin{equation*}
    \mu^\star_{\lambda}(\bar y) = \mus + \Phi \AJ \Phi^*(\bar y - y) ~,
  \end{equation*}
  whence we deduce
  \begin{equation*}
    \frac{\partial \mus}{\partial y} = \Phi \AJ \Phi^* .
  \end{equation*}
  Moreover, owing to statement (i), this expression is valid on $\RR^Q \setminus \Hh_{\cdot,\lambda}$, a set of full Lebesgue measure.
\end{proof}



\subsection{Proof of Theorem \ref{thm:gsure}}

We First recall Stein's lemma whose proof can be found in \cite{stein1981estimation}.

\begin{lem}[Stein's lemma] \label{lem:stein}
Let $y = \Phi x_0 + w$ with $w \sim \Nn(0,\sigma^2\Id_Q)$.
Assume that $g : y \mapsto g(y)$ is weakly differentiable (and a fortiori a single-valued mapping),
then
\begin{align*}
  \EE_w \dotp{w}{g(y)}
  =
  \si^2 \EE_w \tr\bra{\frac{\partial g(y)}{\partial y}} ~.
\end{align*}

\end{lem}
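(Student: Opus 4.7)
The plan is to reduce the identity to a componentwise integration-by-parts on the Gaussian density. Write $\EE_w\dotp{w}{g(y)} = \sum_{i=1}^Q \EE_w[w_i g_i(\Phi x_0 + w)]$ and, for each coordinate $i$, unfold the expectation into the integral
\begin{equation*}
  \EE_w[w_i g_i(\Phi x_0 + w)] = \int_{\RR^Q} w_i\, g_i(\Phi x_0 + w)\, \phi_\si(w)\, dw,
\end{equation*}
where $\phi_\si$ is the density of $\Nn(0,\si^2\Id_Q)$. The key algebraic observation is the Gaussian score identity $w_i \phi_\si(w) = -\si^2 \,\partial_{w_i}\phi_\si(w)$, which turns each term into $-\si^2 \int g_i(\Phi x_0 + w)\, \partial_{w_i}\phi_\si(w)\, dw$.

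Next I would apply integration by parts in the weak sense. Weak differentiability of $y \mapsto g(y)$ (invariant under the translation $y = \Phi x_0 + w$) is precisely what is needed to interpret $\partial_{w_i} g_i$ as a locally integrable function and to legitimately move the derivative from $\phi_\si$ onto $g_i$. Combined with the superexponential decay of $\phi_\si$ (which dominates the assumed polynomial growth conditions on $g$ implicit in $g \in L^2(\phi_\si)$), the boundary contributions vanish and one obtains
\begin{equation*}
  -\si^2 \int g_i(\Phi x_0 + w)\, \partial_{w_i}\phi_\si(w)\, dw = \si^2 \int \partial_{w_i} g_i(\Phi x_0 + w)\, \phi_\si(w)\, dw.
\end{equation*}
Summing over $i$ and recognizing $\sum_i \partial_{w_i} g_i = \tr(\partial g/\partial y)$ (since $\partial w/\partial y = \Id$) yields exactly $\si^2 \EE_w \tr[\partial g(y)/\partial y]$.

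The main obstacle, and the only nontrivial analytic point, is the rigorous justification of the integration by parts under mere weak differentiability. The cleanest route is a standard mollification/truncation argument: approximate $g$ by smooth compactly supported functions $g^{(n)}$ converging to $g$ in $W^{1,1}_{\text{loc}}$, perform IBP for each $g^{(n)}$ where the identity is elementary, and then pass to the limit using dominated convergence along with the integrability of $w_i \phi_\si$ and the assumed integrability of $g$ and $\partial g/\partial y$ against $\phi_\si$. Under the implicit moment conditions customary in the SURE framework (finite second moments of $g(y)$ and of the entries of $\partial g/\partial y$), both limits are justified and the identity is established. Since this analytic lemma is classical and is already credited to Stein~\cite{stein1981estimation}, I would simply cite it and sketch the above three-line derivation, leaving the IBP justification to the reference.
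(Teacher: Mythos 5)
Your proposal is correct and is the classical Gaussian integration-by-parts argument; the paper itself gives no proof of this lemma and simply defers to Stein's original reference, which contains exactly the argument you sketch (score identity $w_i \phi_\si(w) = -\si^2 \partial_{w_i}\phi_\si(w)$, componentwise IBP justified by weak differentiability and integrability, then summation into the trace). Nothing to add: your treatment, including the acknowledgment that the rigorous IBP under mere weak differentiability is handled by mollification and is left to the cited reference, matches the paper's handling of the statement.
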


Let us now turn to the proof of Theorem \ref{thm:gsure}.
\begin{proof}
  Since $y \mapsto \muTY=\Phi \solTY$ is weakly differentiable, so is $A^* A \muTY$ and
  we have
  \begin{align*}
    \frac{\partial A^* A \muTY}{\partial y}
    =
    A^* A \frac{\partial \muTY}{\partial y} ~.
  \end{align*}
  Then,
  using Lemma~\ref{lem:stein}, we get
  \begin{align*}
    \EE_w \dotp{w}{A^* A \muTY}
    =
    \sigma^2
    \EE_w \tr \pa{A^* A \frac{\partial \muTY}{\partial y}}
    =
    \sigma^2 \EE_w \dev{\theta}{A}{y} ~.
  \end{align*}
  Using the decomposition $A y = A \Phi x_0 + A w$, we obtain
  \begin{align*}
    \EE_w\normd{A y - A \muTY}^2
    &=
    \EE_w\normd{A \Phi x_0 + A w}^2
    - 2 \EE_w \dotp{A \Phi x_0 + A w}{A \muTY}\\
    & \hspace{1cm}
    + \EE_w\normd{A \muTY}^2\\
    &=
    \EE_w\normd{A \Phi x_0}^2 + \si^2 \tr( A^* A )
    - 2 \EE_w \dotp{ A \Phi x_0}{A \muTY}\\
    &
    \hspace{1cm}
    - 2 \EE_w \dotp{w}{A^* A \muTY} 
    + \EE_w \normd{A \muTY}^2\\
    &=
  \EE_w\normd{ A \Phi x_0 - A \muTY}^2\\
  & \hspace{1cm}
  + \si^2 \tr( A^* A )
  - 2 \si^2 \EE_w \dev{\theta}{A}{y} ~.
\end{align*}
Moreover, $\sum_i \cov_w((A y)_i, (A \muTY)_i)\!=\!\EE_w\dotp{A w}{A \muTY}$,
which shows that $\dev{\theta}{A}{y}$ is indeed an unbiased estimator of $\DOF_\theta^A$.
\end{proof}

\subsection{Proof of Theorem \ref{thm:reliability}} 
\label{sub:reliability_dof}

\begin{proof}
Denote by $R^A$ the reliability of the GSURE for the estimator $\solTY$, i.e.
\begin{align*}
  R^A = \EE_w \left( \GSURE^A(\solTY) - \SE^A(\solTY) \right)^2.
\end{align*}
Let $Q^A(\solTY)$ be the quantity defined as
\begin{align*}
  Q^A(\solTY) = \normd{A \mu_0}^2 + \normd{A \muTY}^2 - 2 \dotp{A y}{A \muTY} + 2 \sigma^2 \dev{\theta}{A}{y}.
\end{align*}
We have
$\GSURE^A(\solTY) - Q^A(\solTY)
= \normd{A y}^2 - \EE_w \normd{A y}^2$, where
\begin{align*}
  \EE_w \normd{A y}^2 &= \normd{A \mu_0}^2 + \sigma^2 \tr(A^* A),\\
  \qandq
  \VV_w \normd{A y}^2 &=
  2 \sigma^4 \left(
  \tr[(A^* A)^2] +
  2 \frac{\normd{A^* A \mu_0}^2}{\sigma^2}
  \right).
\end{align*}
It results that
$\EE_w \left(\GSURE^A(\solTY) - Q^A(\solTY)\right) = 0$,
and hence
\begin{align*}
  \EE_w\left(Q^A(\solTY)\right) =
  \EE_w\left(\GSURE^A(\solTY)\right) =
  \EE_w\left(\SE^A\right).
\end{align*}
Remark that
$
  Q^A(\solTY) - \SE^A(\solTY) =
  2 \left(
  \sigma^2 \dev{\theta}{A}{y} - \dotp{A w}{A \muTY}
  \right).
$
We can now rewrite the reliability in the following form
\begin{align*}
  R^A
  =& \EE_w\left( \GSURE^A(\solTY) - Q^A(\solTY) + Q^A(\solTY) - \SE^A(\solTY) \right)^2\\
  =&
  \VV_w \normd{A y}^2
  \!+\! \EE_w \left(Q^A(\solTY) \!-\! \SE^A(\solTY) \right)^2 \!+\! \\
  & 4 \underbrace{\EE_w\left( \normd{A y}^2 (\sigma^2 \dev{\theta}{A}{y} \!-\! \dotp{A w}{A \muTY}) \right)}_{=T}.
\end{align*}
Lemma~\ref{lem:stein} gives $\EE_w\dotp{A w}{A \muTY} = \sigma^2 \EE_w \dev{\theta}{A}{y}$, and we get
\begin{align*}
  T = \;&
  \underbrace{
    2 \EE_w\left( \dotp{A w}{A \mu_0} (\sigma^2 \dev{\theta}{A}{y} - \dotp{A w}{A \muTY}) \right)
  }_{T_1} +\\
  &
  \underbrace{
    \EE_w\left( \normd{A w}^2 (\sigma^2 \dev{\theta}{A}{y} - \dotp{A w}{A \muTY}) \right)
  }_{T_2}.
\end{align*}
Let $\mu^\star_A(y) = A^* A \muTY$, $\mu^0_A = A^* A \mu_0$
and $w_A = A^* A w$. Observe that
$\dev{\theta}{A}{y} = \diverg \mu^\star_A(y)$ and $w_i\pa{\mu^\star_A(y)}_i$ is weakly differentiable. 
Then by integration by parts (in the same vein as in the proof of Stein's Lemma~\ref{lem:stein}), we get
\begin{align*}
  T_1
  &=
  2 \sigma^2 \sum_{i,j} \EE_w \pa{ w_i (\mu^0_A)_i \pd{\mu^\star_A(y)_j}{w_j} }
  -
  2 \sum_{i,j} \EE_w \pa{ w_i (\mu^0_A)_i w_j \mu^\star_A(y)_j }\\
  &=
  -
  2 \sigma^2 \sum_{i,j} \EE_w \pa{ (\mu^0_A)_i \pd{w_i}{w_j} \mu^\star_A(y)_j }
  =
  -2 \sigma^2
  \EE_w \dotp{\mu^0_A}{\mu^\star_A(y)} ~,
\end{align*}
and
\begin{align*}
  T_2
  &=
  \sigma^2 \sum_{i,j} \EE_w \pa{ w_i (w_A)_i \pd{\mu^\star_A(y)_j}{w_j} }
  -
  \sum_{i,j} \EE_w \pa{ w_i (w_A)_i w_j \mu^\star_A(y)_j }\\
  &=
  - \sigma^2 \sum_{i,j} \EE_w \pa{ \pd{w_i (w_A)_i }{w_j} \mu^\star_A(y)_j } \\
  & = - \sigma^2 \sum_{j} \EE_w \pa{ \mu^\star_A(y)_j \left( \sum_i \pd{w_i (w_A)_i }{w_j} \right) }.
\end{align*}
In turn,
\begin{align*}
  \sum_i \pd{w_i (w_A)_i }{w_j}
  = \pd{}{w_j} \normd{A w}^2
  = 2 (A^*A w)_j
  =
  2 (w_A)_j.
\end{align*}
Hence,
\begin{align*}
  T_2
  &=
  - 2 \sigma^2 \sum_{j} \EE_w \pa{ (\mu^\star_A(y))_j (w_A)_j }
  =
  - 2 \sigma^2 \EE_w \dotp{w_A}{\mu^\star_A(y)} \\
  &= 
  - 2 \sigma^2 \EE_w \dotp{A^*A w}{A^*A\mu^\star(y)} = -2 \sigma^4 \EE_w \dev{\theta}{A^*A}{y} ~,
\end{align*}
where the last equality is again a consequence of Lemma~\ref{lem:stein}.
It follows that
$
  T =
  -2 \sigma^2 \left(
  \EE_w \dotp{\mu^0_A}{\mu^\star_A(y)}
  + \sigma^2 \EE_w\dev{\theta}{A^* A}{y}
  \right)
$.
Moreover using \cite[Property~1]{luisier-surelet} we have
\begin{align*}
  &\EE_w \pa{Q^A(\solTY) - \SE^A(\solTY)}^2 \\
  &=
  4 \EE_w \left(
  \sigma^2 \diverg \mu^\star_A(y) - \dotp{w}{\mu_A^\star(y)}
  \right)^2\\
  &=
  4 \sigma^2 \left(
  \EE_w \normd{\mu^\star_A(y)}^2 +
  \sigma^2 \EE_w \tr\bra{ \left(\dfrac{\partial \mu_A^\star(y)}{\partial y} \right)^2 }
  \right).
\end{align*}
Therefore, the reliability is given by
\begin{align*}
  R^A
  =
  2 \sigma^4 \tr[(A^* A)^2]
  +&
  4 \sigma^2 \EE_w \normd{\mu^0_A - \mu^\star_A(y)}^2\\
  &+
  4 \sigma^4
  \EE_w \left( \tr\bra{ \left(\dfrac{\partial \mu_A^\star(y)}{\partial y}\right)^2 }
    - 2 \dev{\theta}{A^* A}{y} \right)~.
\end{align*}
Rearranging the last term above, we obtain the derived expression.
\end{proof}

\subsection{Proof of Corollary \ref{cor:dof}}

\begin{proof}
  Let $\lambda \in \RR_+^*$.
  From Theorem~\ref{thm:dof}(iii), $y \mapsto \Phi \solS$ is differentiable almost everywhere and we can invoke Theorem~\ref{thm:gsure} to derive the GSURE expressions.

  We also observe that $V = \Phi \AJ \Phi^*$ is the orthogonal projector on $\Im V=\Phi(\GJ)$, so that $\tr V = \dim(\Im V)=\rank(\Phi\Proj_{\GJ})$.
  Since $\Phi$ is injective on \GJ under \eqref{eq:hj}, it follows that $\tr V = \dim( \GJ )$.
  Hence, using Theorem~\ref{thm:gsure} with $A=\Id$, Theorem~\ref{thm:dof}(ii) and \eqref{eq:divdof}, it follows that $\dim( \GJ )$ is an unbiased estimator of $\DOF(\lambda)$.
\end{proof}

\subsection{Proof of Corollary \ref{cor:reliability}}
\begin{proof}
  As $V = \Phi \AJ \Phi^*$ is the orthogonal projector on $\Phi(\GJ)$, we have
  \begin{equation*}
    \tr\bra{A V A^* A (2 \Id - V) A^*}
    \geq 0~.
  \end{equation*}
  Moreover $A^* A$ is Hermitian, hence $\tr\bra{(A^*A)^2}=\norm{A^*A}_F^2$, we obtain
  (with the notation of Section \ref{sub:reliability_dof})
  the following upper-bound of the reliability
  \begin{align*}
    R^A \leq
    2 \sigma^4 \norm{A^* A}_F^2
    +
    4 \sigma^2 \norm{A}^4 \EE \normd{\mu_0 - \mus}^2
  \end{align*}
  where $\norm{.}_F$ is the Frobenius norm and
  $\norm{.}$ the matrix spectral norm.
  Then, for $A \in \RR^{M \times Q}$,
  using classical inequalities, we get
  \begin{align*}
    \norm{A^* A}_F^2
    \leq \rank(A) \norm{A}^4 = \min(M, Q) \norm{A}^4 \leq Q \norm{A}^4~.
  \end{align*}
  Since $\solS$ is a solution of \lasso, we have 
  \begin{align*}
  \tfrac{1}{2}\normd{y - \mus}^2 \leq \obj(\solS) \leq \obj(0) = \tfrac{1}{2}\normd{y}^2.
  \end{align*}
  Thus, using Jensen's inequality, we get
  \begin{align*}
    \EE \normd{\mu_0 - \mus}^2
    \leq
    2 \EE &(\normd{\mu_0 - y}^2 + \normd{y - \mus}^2)\\
    &\leq
    2 \EE (\normd{w}^2 + \normd{y}^2)
    \leq 2 \left(
    \normd{\mu_0}^2
    + 2 Q \sigma^2
    \right).
  \end{align*}
  Altogether, this yields the following upper bound
  \begin{align*}
    \frac{R^A}{\sigma^4 Q^2} \leq
    \norm{A}^4
    \left(\frac{18}{Q}
    +
    \frac{8 \normd{\mu_0}^2}{\sigma^2 Q^2}
    \right)~.
  \end{align*}
  Since $\normd{\mu_0} < \infty$,
  this concludes the proof.
\end{proof}

\subsection{Proof of Proposition \ref{prop:gsure_computation}}
\begin{proof}
  We have
  \eq{
    \tr\bra{ A \Phi \AJ \Phi^* A^* } = \tr\bra{ \Phi \AJ \Phi^* A^* A }.
  }
  Hence denoting $\nu(z) = \AJ \Phi^* z$, and using the fact that for any
  matrix $U$,  $\tr U=\EE_Z \dotp{Z}{U Z}$, we arrive at \eqref{eq-prop-calcul-1}.

  We then use the fact that $\AJ\Phi^*$, the inverse of $\Phi$ on $\GJ$, is the mapping that solves the following linearly constrained least-squares problem
  \eq{
    \AJ \Phi^* z =
    \uargmin{h \in \GJ} \normd{\Phi h - z}^2.
  }
  Writing the KKT conditions of this problem leads to \eqref{eq-prop-calcul-2}, where $\tilde \nu$ are the Lagrange multipliers.
\end{proof}


\bibliographystyle{model1-num-names}
\bibliography{local-param}

\begin{thebibliography}{56}
\expandafter\ifx\csname natexlab\endcsname\relax\def\natexlab#1{#1}\fi
\providecommand{\bibinfo}[2]{#2}
\ifx\xfnm\relax \def\xfnm[#1]{\unskip,\space#1}\fi
\bibitem[{Kirsch(2011)}]{kirsch2011introduction}
\bibinfo{author}{A.~Kirsch}, \bibinfo{title}{An introduction to the
  mathematical theory of inverse problems}, volume \bibinfo{volume}{120},
  \bibinfo{publisher}{Springer Verlag}, \bibinfo{year}{2011}.
\bibitem[{Rudin et~al.(1992)Rudin, Osher, and Fatemi}]{rudin1992nonlinear}
\bibinfo{author}{L.~Rudin}, \bibinfo{author}{S.~Osher},
  \bibinfo{author}{E.~Fatemi},
\newblock \bibinfo{title}{Nonlinear total variation based noise removal
  algorithms},
\newblock \bibinfo{journal}{Physica D: Nonlinear Phenomena}
  \bibinfo{volume}{60} (\bibinfo{year}{1992}) \bibinfo{pages}{259--268}.
\bibitem[{Chambolle et~al.(2010)Chambolle, Caselles, Novaga, Cremers, and
  Pock}]{chambolle2009tv}
\bibinfo{author}{A.~Chambolle}, \bibinfo{author}{V.~Caselles},
  \bibinfo{author}{M.~Novaga}, \bibinfo{author}{D.~Cremers},
  \bibinfo{author}{T.~Pock}, \bibinfo{title}{Theoretical Foundations and
  Numerical Methods for Sparse Recovery}, \bibinfo{publisher}{De Gruyter}, pp.
  \bibinfo{pages}{263--340}.
\bibitem[{Nikolova(2000)}]{nikolova2000local}
\bibinfo{author}{M.~Nikolova},
\newblock \bibinfo{title}{Local strong homogeneity of a regularized estimator},
\newblock \bibinfo{journal}{SIAM Journal on Applied Mathematics}
  \bibinfo{volume}{61} (\bibinfo{year}{2000}) \bibinfo{pages}{633--658}.
\bibitem[{Ring(2000)}]{Ring2000}
\bibinfo{author}{W.~Ring},
\newblock \bibinfo{title}{Structural properties of solutions to total variation
  regularization problems},
\newblock \bibinfo{journal}{ESAIM Mathematical Modelling and Numerical
  Analysis} \bibinfo{volume}{34} (\bibinfo{year}{2000})
  \bibinfo{pages}{799--810}.
\bibitem[{Caselles et~al.(2008)Caselles, Chambolle, and
  Novaga}]{caselles2008discontinuity}
\bibinfo{author}{V.~Caselles}, \bibinfo{author}{A.~Chambolle},
  \bibinfo{author}{M.~Novaga},
\newblock \bibinfo{title}{The discontinuity set of solutions of the tv
  denoising problem and some extensions},
\newblock \bibinfo{journal}{Multiscale Modeling and Simulation}
  \bibinfo{volume}{6} (\bibinfo{year}{2008}) \bibinfo{pages}{879--894}.
\bibitem[{Tibshirani(1996)}]{tibshirani1996regre}
\bibinfo{author}{R.~Tibshirani},
\newblock \bibinfo{title}{Regression shrinkage and selection via the {L}asso},
\newblock \bibinfo{journal}{Journal of the Royal Statistical Society. Series B.
  Methodological} \bibinfo{volume}{58} (\bibinfo{year}{1996})
  \bibinfo{pages}{267--288}.
\bibitem[{Chen et~al.(1999)Chen, Donoho, and Saunders}]{chen1999atomi}
\bibinfo{author}{S.~Chen}, \bibinfo{author}{D.~Donoho},
  \bibinfo{author}{M.~Saunders},
\newblock \bibinfo{title}{Atomic decomposition by basis pursuit},
\newblock \bibinfo{journal}{SIAM journal on scientific computing}
  \bibinfo{volume}{20} (\bibinfo{year}{1999}) \bibinfo{pages}{33--61}.
\bibitem[{Elad et~al.(2007)Elad, Milanfar, and Rubinstein}]{elad2007analy}
\bibinfo{author}{M.~Elad}, \bibinfo{author}{P.~Milanfar},
  \bibinfo{author}{R.~Rubinstein},
\newblock \bibinfo{title}{{Analysis versus synthesis in signal priors}},
\newblock \bibinfo{journal}{Inverse Problems} \bibinfo{volume}{23}
  (\bibinfo{year}{2007}) \bibinfo{pages}{947--968}.
\bibitem[{Cand{\`e}s et~al.(2010)Cand{\`e}s, Eldar, Needell, and
  Randall}]{candes2010compr}
\bibinfo{author}{E.~Cand{\`e}s}, \bibinfo{author}{Y.~Eldar},
  \bibinfo{author}{D.~Needell}, \bibinfo{author}{P.~Randall},
\newblock \bibinfo{title}{{Compressed sensing with coherent and redundant
  dictionaries}},
\newblock \bibinfo{journal}{Applied and Computational Harmonic Analysis}
  \bibinfo{volume}{31} (\bibinfo{year}{2010}) \bibinfo{pages}{59--73}.
\bibitem[{Grasmair(2011)}]{grasmair2011linear}
\bibinfo{author}{M.~Grasmair},
\newblock \bibinfo{title}{Linear convergence rates for tikhonov regularization
  with positively homogeneous functionals},
\newblock \bibinfo{journal}{Inverse Problems} \bibinfo{volume}{27}
  (\bibinfo{year}{2011}) \bibinfo{pages}{075014}.
\bibitem[{Nam et~al.(2011)Nam, Davies, Elad, and Gribonval}]{nam2011the-c}
\bibinfo{author}{S.~Nam}, \bibinfo{author}{M.~Davies},
  \bibinfo{author}{M.~Elad}, \bibinfo{author}{R.~Gribonval},
  \bibinfo{title}{The cosparse analysis model and algorithms},
  \bibinfo{year}{2011}. \bibinfo{note}{Preprint arxiv-1106.4987v1}.
\bibitem[{Vaiter et~al.(2011)Vaiter, Peyr{\'e}, Dossal, and
  Fadili}]{vaiter-analysis}
\bibinfo{author}{S.~Vaiter}, \bibinfo{author}{G.~Peyr{\'e}},
  \bibinfo{author}{C.~Dossal}, \bibinfo{author}{M.~J. Fadili},
  \bibinfo{title}{Robust Sparse Analysis Regularization},
  \bibinfo{type}{Technical Report}, Preprint Hal-00627452,
  \bibinfo{year}{2011}.
\bibitem[{Lu and Do(2008)}]{lu2008a-the}
\bibinfo{author}{Y.~Lu}, \bibinfo{author}{M.~Do},
\newblock \bibinfo{title}{{A theory for sampling signals from a union of
  subspaces}},
\newblock \bibinfo{journal}{IEEE Transactions on Signal Processing}
  \bibinfo{volume}{56} (\bibinfo{year}{2008}) \bibinfo{pages}{2334--2345}.
\bibitem[{Bonnans and Shapiro(2000)}]{bonnans2000perturbation}
\bibinfo{author}{J.~Bonnans}, \bibinfo{author}{A.~Shapiro},
  \bibinfo{title}{Perturbation analysis of optimization problems},
  \bibinfo{publisher}{Springer Verlag}, \bibinfo{year}{2000}.
\bibitem[{Mordukhovich(1992)}]{mordukhovich1992sensitivity}
\bibinfo{author}{B.~Mordukhovich},
\newblock \bibinfo{title}{Sensitivity analysis in nonsmooth optimization},
\newblock \bibinfo{journal}{Theoretical Aspects of Industrial Design (D. A.
  Field and V. Komkov, eds.), SIAM Volumes in Applied Mathematics}
  \bibinfo{volume}{58} (\bibinfo{year}{1992}) \bibinfo{pages}{32--46}.
\bibitem[{Osborne et~al.(2000{\natexlab{a}})Osborne, Presnell, and
  Turlach}]{osborne2000lasso}
\bibinfo{author}{M.~Osborne}, \bibinfo{author}{B.~Presnell},
  \bibinfo{author}{B.~Turlach},
\newblock \bibinfo{title}{On the lasso and its dual},
\newblock \bibinfo{journal}{Journal of Computational and Graphical statistics}
  (\bibinfo{year}{2000}{\natexlab{a}}) \bibinfo{pages}{319--337}.
\bibitem[{Osborne et~al.(2000{\natexlab{b}})Osborne, Presnell, and
  Turlach}]{osborne2000new}
\bibinfo{author}{M.~Osborne}, \bibinfo{author}{B.~Presnell},
  \bibinfo{author}{B.~Turlach},
\newblock \bibinfo{title}{A new approach to variable selection in least squares
  problems},
\newblock \bibinfo{journal}{IMA journal of numerical analysis}
  \bibinfo{volume}{20} (\bibinfo{year}{2000}{\natexlab{b}})
  \bibinfo{pages}{389}.
\bibitem[{Malioutov et~al.(2005)Malioutov, Cetin, and
  Willsky}]{malioutov2005homotopy}
\bibinfo{author}{D.~Malioutov}, \bibinfo{author}{M.~Cetin},
  \bibinfo{author}{A.~Willsky},
\newblock \bibinfo{title}{Homotopy continuation for sparse signal
  representation},
\newblock in: \bibinfo{booktitle}{Acoustics, Speech, and Signal Processing,
  2005. Proceedings.(ICASSP'05). IEEE International Conference on},
  volume~\bibinfo{volume}{5}, \bibinfo{organization}{IEEE}, pp.
  \bibinfo{pages}{733--736}.
\bibitem[{Donoho and Tsaig(2008)}]{donoho2008fast}
\bibinfo{author}{D.~Donoho}, \bibinfo{author}{Y.~Tsaig},
\newblock \bibinfo{title}{Fast solution of $\ell^1$-norm minimization problems
  when the solution may be sparse},
\newblock \bibinfo{journal}{IEEE Transactions on Information Theory}
  \bibinfo{volume}{54} (\bibinfo{year}{2008}) \bibinfo{pages}{4789--4812}.
\bibitem[{Tibshirani and Taylor(2011)}]{tibshirani2011solution}
\bibinfo{author}{R.~Tibshirani}, \bibinfo{author}{J.~Taylor},
\newblock \bibinfo{title}{The solution path of the generalized {L}asso},
\newblock \bibinfo{journal}{The Annals of Statistics} \bibinfo{volume}{39}
  (\bibinfo{year}{2011}) \bibinfo{pages}{1335--1371}.
\bibitem[{Efron(1986)}]{efron1986biased}
\bibinfo{author}{B.~Efron},
\newblock \bibinfo{title}{How biased is the apparent error rate of a prediction
  rule?},
\newblock \bibinfo{journal}{Journal of the American Statistical Association}
  \bibinfo{volume}{81} (\bibinfo{year}{1986}) \bibinfo{pages}{461--470}.
\bibitem[{Mallows(1973)}]{mallows}
\bibinfo{author}{C.~L. Mallows},
\newblock \bibinfo{title}{Some comments on {$C_p$}},
\newblock \bibinfo{journal}{Technometrics} \bibinfo{volume}{15}
  (\bibinfo{year}{1973}) \bibinfo{pages}{661--675}.
\bibitem[{Akaike(1973)}]{akaike1973information}
\bibinfo{author}{H.~Akaike},
\newblock \bibinfo{title}{Information theory and an extension of the maximum
  likelihood principle},
\newblock in: \bibinfo{booktitle}{Second international symposium on information
  theory}, volume~\bibinfo{volume}{1}, \bibinfo{organization}{Springer Verlag},
  pp. \bibinfo{pages}{267--281}.
\bibitem[{Schwarz(1978)}]{schwarz1978estimating}
\bibinfo{author}{G.~Schwarz},
\newblock \bibinfo{title}{Estimating the dimension of a model},
\newblock \bibinfo{journal}{The annals of statistics} \bibinfo{volume}{6}
  (\bibinfo{year}{1978}) \bibinfo{pages}{461--464}.
\bibitem[{Golub et~al.(1979)Golub, Heath, and Wahba}]{golub1979generalized}
\bibinfo{author}{G.~Golub}, \bibinfo{author}{M.~Heath},
  \bibinfo{author}{G.~Wahba},
\newblock \bibinfo{title}{Generalized cross-validation as a method for choosing
  a good ridge parameter},
\newblock \bibinfo{journal}{Technometrics}  (\bibinfo{year}{1979})
  \bibinfo{pages}{215--223}.
\bibitem[{Stein(1981)}]{stein1981estimation}
\bibinfo{author}{C.~Stein},
\newblock \bibinfo{title}{Estimation of the mean of a multivariate normal
  distribution},
\newblock \bibinfo{journal}{The Annals of Statistics} \bibinfo{volume}{9}
  (\bibinfo{year}{1981}) \bibinfo{pages}{1135--1151}.
\bibitem[{Li(1985)}]{li1985sure}
\bibinfo{author}{K.-C. Li},
\newblock \bibinfo{title}{From {S}tein's unbiased risk estimates to the method
  of generalized cross validation},
\newblock \bibinfo{journal}{Ann. Statist.} \bibinfo{volume}{13}
  (\bibinfo{year}{1985}) \bibinfo{pages}{1352--1377}.
\bibitem[{Efron(2004)}]{efron2004estimation}
\bibinfo{author}{B.~Efron},
\newblock \bibinfo{title}{The estimation of prediction error: Covariance
  penalties and cross-validation},
\newblock \bibinfo{journal}{Journal of the American Statistical Association}
  \bibinfo{volume}{99} (\bibinfo{year}{2004}) \bibinfo{pages}{619--632}.
\bibitem[{Donoho and Johnstone(1995)}]{donoho1995adapting}
\bibinfo{author}{D.~Donoho}, \bibinfo{author}{I.~Johnstone},
\newblock \bibinfo{title}{{Adapting to Unknown Smoothness Via Wavelet
  Shrinkage.}},
\newblock \bibinfo{journal}{{ Journal of the American Statistical Association}}
  \bibinfo{volume}{90} (\bibinfo{year}{1995}) \bibinfo{pages}{1200--1224}.
\bibitem[{Cai and Zhou(2009)}]{cai2009data}
\bibinfo{author}{T.~Cai}, \bibinfo{author}{H.~Zhou},
\newblock \bibinfo{title}{A data-driven block thresholding approach to wavelet
  estimation},
\newblock \bibinfo{journal}{The Annals of Statistics} \bibinfo{volume}{37}
  (\bibinfo{year}{2009}) \bibinfo{pages}{569--595}.
\bibitem[{Blu and Luisier(2007)}]{blu2007surelet}
\bibinfo{author}{T.~Blu}, \bibinfo{author}{F.~Luisier},
\newblock \bibinfo{title}{The {SURE}-{LET} approach to image denoising},
\newblock \bibinfo{journal}{{{IEEE} Trans. Image Process.}}
  \bibinfo{volume}{16} (\bibinfo{year}{2007}) \bibinfo{pages}{2778--2786}.
\bibitem[{Johnstone(1999)}]{johnstoneinversesure99}
\bibinfo{author}{I.~M. Johnstone},
\newblock \bibinfo{title}{Wavelet shrinkage for correlated data and. inverse
  problems: Adaptivity results},
\newblock \bibinfo{journal}{Statistica Sinica} \bibinfo{volume}{9}
  (\bibinfo{year}{1999}) \bibinfo{pages}{51--83}.
\bibitem[{{Van De Ville} and Kocher(2009)}]{vandeville2009sure}
\bibinfo{author}{D.~{Van De Ville}}, \bibinfo{author}{M.~Kocher},
\newblock \bibinfo{title}{{SURE}-based {Non-Local Means}},
\newblock \bibinfo{journal}{{{IEEE} Signal Process. Lett.}}
  \bibinfo{volume}{16} (\bibinfo{year}{2009}) \bibinfo{pages}{973--976}.
\bibitem[{Duval et~al.(2011)Duval, Aujol, and Gousseau}]{duval2011abv}
\bibinfo{author}{V.~Duval}, \bibinfo{author}{J.-F. Aujol},
  \bibinfo{author}{Y.~Gousseau},
\newblock \bibinfo{title}{A bias-variance approach for the non-local means},
\newblock \bibinfo{journal}{{{SIAM} Journal Imaging Sci.}} \bibinfo{volume}{4}
  (\bibinfo{year}{2011}) \bibinfo{pages}{760--788}.
\bibitem[{Deledalle et~al.(2011)Deledalle, Duval, and Salmon}]{dds2011nlmsap}
\bibinfo{author}{C.-A. Deledalle}, \bibinfo{author}{V.~Duval},
  \bibinfo{author}{J.~Salmon},
\newblock \bibinfo{title}{{Non-local Methods with Shape-Adaptive Patches
  (NLM-SAP)}},
\newblock \bibinfo{journal}{{ Journal of Mathematical Imaging and Vision}}
  (\bibinfo{year}{2011}) \bibinfo{pages}{1--18}.
\bibitem[{Rice(1986)}]{rice1986choice}
\bibinfo{author}{J.~Rice},
\newblock \bibinfo{title}{Choice of smoothing parameter in deconvolution
  problems},
\newblock \bibinfo{journal}{Contemporary Mathematics} \bibinfo{volume}{59}
  (\bibinfo{year}{1986}) \bibinfo{pages}{137--151}.
\bibitem[{Vonesch et~al.(2008)Vonesch, Ramani, and Unser}]{vonesch2008sure}
\bibinfo{author}{C.~Vonesch}, \bibinfo{author}{S.~Ramani},
  \bibinfo{author}{M.~Unser},
\newblock \bibinfo{title}{Recursive risk estimation for non-linear image
  deconvolution with a wavelet-domain sparsity constraint},
\newblock in: \bibinfo{booktitle}{ICIP}, \bibinfo{organization}{IEEE}, pp.
  \bibinfo{pages}{665--668}.
\bibitem[{Desbat and Girard(1995)}]{desbat1995theminimum}
\bibinfo{author}{L.~Desbat}, \bibinfo{author}{D.~Girard},
\newblock \bibinfo{title}{The 'minimum reconstruction error' choice of
  regularization parameters: Some more efficient methods and their application
  to deconvolution problems},
\newblock \bibinfo{journal}{SIAM J. Sci. Comput} \bibinfo{volume}{16}
  (\bibinfo{year}{1995}) \bibinfo{pages}{1387--1403}.
\bibitem[{Eldar(2009)}]{eldar-gsure}
\bibinfo{author}{Y.~C. Eldar},
\newblock \bibinfo{title}{Generalized {SURE} for exponential families:
  Applications to regularization},
\newblock \bibinfo{journal}{IEEE Transactions on Signal Processing}
  \bibinfo{volume}{57} (\bibinfo{year}{2009}) \bibinfo{pages}{471--481}.
\bibitem[{Zou et~al.(2007)Zou, Hastie, and Tibshirani}]{zou2007degrees}
\bibinfo{author}{H.~Zou}, \bibinfo{author}{T.~Hastie},
  \bibinfo{author}{R.~Tibshirani},
\newblock \bibinfo{title}{On the ``degrees of freedom'' of the lasso},
\newblock \bibinfo{journal}{The Annals of Statistics} \bibinfo{volume}{35}
  (\bibinfo{year}{2007}) \bibinfo{pages}{2173--2192}.
\bibitem[{Kachour et~al.(2012)Kachour, Dossal, Fadili, Peyr\'e, and
  Chesneau}]{kachour-dof-preprint}
\bibinfo{author}{M.~Kachour}, \bibinfo{author}{C.~Dossal},
  \bibinfo{author}{M.~Fadili}, \bibinfo{author}{G.~Peyr\'e},
  \bibinfo{author}{C.~Chesneau},
\newblock \bibinfo{title}{The degrees of freedom of penalized l1 minimization},
\newblock \bibinfo{journal}{Statistica Sinica}  (\bibinfo{year}{2012}).
  \bibinfo{note}{In press}.
\bibitem[{Tibshirani and Taylor(2012)}]{tibshirani2012dof}
\bibinfo{author}{R.~J. Tibshirani}, \bibinfo{author}{J.~Taylor},
  \bibinfo{title}{Degrees of Freedom in Lasso Problems},
  \bibinfo{type}{Technical Report}, arXiv:1111.0653, \bibinfo{year}{2012}.
\bibitem[{Pesquet et~al.(2009)Pesquet, Benazza-Benyahia, and
  Chaux}]{pesquet-deconv}
\bibinfo{author}{J.-C. Pesquet}, \bibinfo{author}{A.~Benazza-Benyahia},
  \bibinfo{author}{C.~Chaux},
\newblock \bibinfo{title}{A {SURE} approach for digital signal/image
  deconvolution problems},
\newblock \bibinfo{journal}{IEEE Transactions on Signal Processing}
  \bibinfo{volume}{57} (\bibinfo{year}{2009}) \bibinfo{pages}{4616--4632}.
\bibitem[{Solo(1996)}]{solo1996sure}
\bibinfo{author}{V.~Solo},
\newblock \bibinfo{title}{A {SURE-fired} way to choose smoothing parameters in
  ill-conditioned inverse problems},
\newblock in: \bibinfo{booktitle}{{{IEEE} Int. Conf. Image Process. (ICIP)}},
  volume~\bibinfo{volume}{3}, \bibinfo{organization}{IEEE}, pp.
  \bibinfo{pages}{89--92}.
\bibitem[{Ye(1998)}]{ye1998measuring}
\bibinfo{author}{J.~Ye},
\newblock \bibinfo{title}{On measuring and correcting the effects of data
  mining and model selection},
\newblock \bibinfo{journal}{Journal of the American Statistical Association}
  (\bibinfo{year}{1998}) \bibinfo{pages}{120--131}.
\bibitem[{Shen and Ye(2002)}]{shen2002adaptive}
\bibinfo{author}{X.~Shen}, \bibinfo{author}{J.~Ye},
\newblock \bibinfo{title}{Adaptive model selection},
\newblock \bibinfo{journal}{Journal of the American Statistical Association}
  \bibinfo{volume}{97} (\bibinfo{year}{2002}) \bibinfo{pages}{210--221}.
\bibitem[{Ramani et~al.(2008)Ramani, Blu, and Unser}]{ramani2008montecarlosure}
\bibinfo{author}{S.~Ramani}, \bibinfo{author}{T.~Blu},
  \bibinfo{author}{M.~Unser},
\newblock \bibinfo{title}{Monte-{C}arlo {SURE}: a black-box optimization of
  regularization parameters for general denoising algorithms},
\newblock \bibinfo{journal}{{{IEEE} Trans. Image Process.}}
  \bibinfo{volume}{17} (\bibinfo{year}{2008}) \bibinfo{pages}{1540--1554}.
\bibitem[{Giryes et~al.(2011)Giryes, Elad, and Eldar}]{giryes-proj-gsure}
\bibinfo{author}{R.~Giryes}, \bibinfo{author}{M.~Elad},
  \bibinfo{author}{Y.~Eldar},
\newblock \bibinfo{title}{The projected {GSURE} for automatic parameter tuning
  in iterative shrinkage methods},
\newblock \bibinfo{journal}{Applied and Computational Harmonic Analysis}
  \bibinfo{volume}{30} (\bibinfo{year}{2011}) \bibinfo{pages}{407--422}.
\bibitem[{Deledalle et~al.(2012)Deledalle, Vaiter, Peyr{\'e}, Fadili, and
  Dossal}]{deledalle2012proximalncmip}
\bibinfo{author}{C.~Deledalle}, \bibinfo{author}{S.~Vaiter},
  \bibinfo{author}{G.~Peyr{\'e}}, \bibinfo{author}{M.~J. Fadili},
  \bibinfo{author}{C.~Dossal},
\newblock \bibinfo{title}{{Proximal Splitting Derivatives for Risk
  Estimation}},
\newblock in: \bibinfo{booktitle}{2nd International Workshop on New
  Computational Methods for Inverse Problems (NCMIP)},
  \bibinfo{address}{Paris}.
\bibitem[{Chambolle(2004)}]{chambolle2004algorithm}
\bibinfo{author}{A.~Chambolle},
\newblock \bibinfo{title}{An algorithm for total variation minimization and
  applications},
\newblock \bibinfo{journal}{Journal of Mathematical Imaging and Vision}
  \bibinfo{volume}{20} (\bibinfo{year}{2004}) \bibinfo{pages}{89--97}.
\bibitem[{Chambolle and Pock(2011)}]{chambolle2011first}
\bibinfo{author}{A.~Chambolle}, \bibinfo{author}{T.~Pock},
\newblock \bibinfo{title}{A first-order primal-dual algorithm for convex
  problems with applications to imaging},
\newblock \bibinfo{journal}{Journal of Mathematical Imaging and Vision}
  \bibinfo{volume}{40} (\bibinfo{year}{2011}) \bibinfo{pages}{120--145}.
\bibitem[{Raguet et~al.(2011)Raguet, Fadili, and Peyr{\'e}}]{raguet2011gfb}
\bibinfo{author}{H.~Raguet}, \bibinfo{author}{M.~J. Fadili},
  \bibinfo{author}{G.~Peyr{\'e}}, \bibinfo{title}{{Generalized Forward-Backward
  Splitting}}, \bibinfo{type}{Technical Report}, Preprint Hal-00613637,
  \bibinfo{year}{2011}.
\bibitem[{Pustelnik et~al.(2011)Pustelnik, Chaux, and
  Pesquet}]{pustelnik2011prox}
\bibinfo{author}{N.~Pustelnik}, \bibinfo{author}{C.~Chaux},
  \bibinfo{author}{J.-C. Pesquet},
\newblock \bibinfo{title}{Parallel pro{X}imal algorithm for image restoration
  using hybrid regularization},
\newblock \bibinfo{journal}{IEEE Transactions on Image Processing}
  \bibinfo{volume}{20} (\bibinfo{year}{2011}) \bibinfo{pages}{2450--2462}.
\bibitem[{Combettes and Pesquet(2012)}]{CombettesPesquet11}
\bibinfo{author}{P.~L. Combettes}, \bibinfo{author}{J.-C. Pesquet},
\newblock \bibinfo{title}{Primal-dual splitting algorithm for solving
  inclusions with mixtures of composite, {L}ipschitzian, and parallel-sum
  monotone operators},
\newblock \bibinfo{journal}{Set-Valued and Variational Analysis}
  \bibinfo{volume}{20} (\bibinfo{year}{2012}) \bibinfo{pages}{307--330}.
\bibitem[{Luisier(2010)}]{luisier-surelet}
\bibinfo{author}{F.~Luisier}, \bibinfo{title}{The SURE-LET approach to image
  denoising}, Ph.D. thesis, {\'E}cole polytechnique f{\'e}d{\'e}rale de
  lausanne, \bibinfo{year}{2010}.

\end{thebibliography}

\end{document}